\newcommand{\sgn}{\operatorname{sgn}}
\newcommand{\maxapprox}{\text{max}_\delta}
\newcommand{\ttha}{\tilde{\theta}^{\rm a}} 
\newcommand{\gja}{G_J^{\rm a}}
\newcommand{\eja}{\eta_J^{\rm a}}
\newcommand{\gha}{G_h^{\rm a}}
\newcommand{\eha}{\eta_h^{\rm a}}
\newcommand{\ga}{\gamma^{\rm a}}
\newcommand{\gjaT}{G_J^{{\rm a} T}}
\newcommand{\tthae}{\tilde{\theta}^{\rm a, e}} 
\newcommand{\gjae}{G_J^{\rm a, e}}
\newcommand{\ejae}{\eta_J^{\rm a, e}}
\newcommand{\ghae}{G_h^{\rm a, e}}
\newcommand{\ehae}{\eta_h^{\rm a, e}}
\newcommand{\gae}{\gamma^{\rm a, e}}
\newcommand{\tthaeT}{\tilde{\theta}^{{\rm a, e} T}} 
\newcommand{\gjaeT}{G_J^{{\rm a, e} T}}
\newcommand{\tthac}{\tilde{\theta}_{{\rm c}}^{\rm a}} 
\newcommand{\gjac}{G_{J,{\rm c}}^{\rm a}}
\newcommand{\ejac}{\eta_{J,{\rm c}}^{\rm a}}
\newcommand{\ghac}{G_{h,{\rm c}}^{\rm a}}
\newcommand{\ehac}{\eta_{h,{\rm c}}^{\rm a}}
\newcommand{\gac}{\gamma_{\rm c}^{\rm a}}
\newcommand{\tthacT}{\tilde{\theta}_{{\rm c}}^{{\rm a} T}}
\newcommand{\omegaf}{\omega_{\textup{f}}}
\newtheorem{theorem}{Theorem}
\newtheorem{lemma}{Lemma}
\newtheorem{proposition}{Proposition}
\newtheorem{corollary}{Corollary}
\newtheorem{assumption}{Assumption}
\begin{document}

\begin{frontmatter}
\title{Local 
Practical Safe Extremum Seeking\\ 
with Assignable Rate of Attractivity to the Safe Set%
\thanksref{footnoteinfo}} 

\thanks[footnoteinfo]{Supported by Los Alamos National Lab LDRD DR
Project 20220074DR. Corresponding author A.~Williams.}

\author[LANL,UCSD]{Alan Williams}\ead{awilliams@lanl.gov}\ead{awilliam@ucsd.edu}, 
\author[UCSD]{Miroslav Krstic}\ead{krstic@ucsd.edu}, 
\author[LANL]{Alexander Scheinker}\ead{ascheink@lanl.gov}

\address[LANL]{Los Alamos National Lab, Los Alamos NM}  
\address[UCSD]{University of California - San Diego, San Diego CA}             

\begin{keyword}                           
Constrained Optimization; Extremum Seeking; Safe Control.  
\end{keyword} 

\begin{abstract}                          
We present Assignably Safe Extremum Seeking (ASfES), an algorithm designed to minimize a measured objective function while maintaining a measured metric of safety (a control barrier function or CBF) be  positive in a practical sense. We  ensure that for trajectories with safe initial conditions, the violation of safety can be made arbitrarily small with appropriately chosen design constants. We also guarantee an assignable ``attractivity'' rate: from unsafe initial conditions, the trajectories approach the safe set, in the sense of the measured CBF, at a rate no slower than a user-assigned rate. Similarly, from safe initial conditions, the trajectories approach the unsafe set, in the sense of the CBF, no faster than the assigned attractivity rate. The feature of assignable attractivity is not present in the semiglobal version of safe extremum seeking, where the semiglobality of convergence is achieved by slowing the adaptation. We also demonstrate local convergence of the parameter to a neighborhood of the  minimum of the objective function constrained to the safe set. The ASfES algorithm and analysis are multivariable, but we also extend the algorithm to a Newton-Based ASfES scheme (NB-ASfES) which we show is only useful in the scalar case. The proven properties of the designs are illustrated through simulation examples.
\end{abstract}

\end{frontmatter}

\section{Introduction}
We introduce an algorithm for Assignably Safe Extremum Seeking (ASfES), where both a lower bound on the rate of convergence {\em to} the safe set and an upper bound on the rate of convergence {\em towards} the unsafe set are user-assignable. 

The algorithm combines traditional Extremum Seeking (ES) methods \cite{krstic2000stability} \cite{ariyur2003real} with the quadratic program (QP)-based control barrier function (CBF) safety approach of \cite{ames2016control}. This work is essentially concerned with finding an algorithm which solves the following constrained optimization problem:
\begin{equation}
\min_{\theta(t)} J(\theta(t)) \text{ s.t. } h(\theta(t)) \geq 0 \text{ for all } t \in [0, \infty) . \label{eqn:opt_problem_intro}
\end{equation}
The objective function $J$ should be minimized but safety, represented by an unknown, yet measured, function $h(\theta)$, should be maintained by keeping $h$ positive over the entire course of time. The idea of ``practical safety'' has been described through the use of an inequality,
\begin{equation} \label{eqn:prac_safety_ineq_intro}
    h(\theta(t)) \geq h(\theta(0)) e^{-c t} + O(\epsilon) \text{ for all } t \in [0, \infty),
\end{equation}
where $c>0$ is value we term the ``attractivity'' rate and $O(\epsilon)$ is a violation of safety (in the worst case $O(\epsilon)$ is negative) which may be made small through appropriate choices of design constants. This inequality and the value $c$ describes how quickly a trajectory, which starts safe --- $\theta(0) \in \{h(\theta) \geq 0\}$ --- is allowed to approach the practical, marginally safe boundary $h=O(\epsilon)$. But, for trajectories which start unsafe --- $\theta(0) \in \{h(\theta) < 0\}$ --- it describes how fast (at least) a trajectory converges to the safe set. It is therefore desirable to be able to assign the value $c$. In our previous work \cite{williams2023semi}, the value $c$ is fundamentally unknown to the user, and not assignable. Because of the semiglobal design in \cite{williams2023semi}, the parameter $\theta(t)$ must travel \emph{slowly}, in order to achieve semiglobality, with a small adaptation gain. With a small adaptation gain, the estimates of the unknown gradients of the optimization problem can be computed accurately such that convergence can be achieved from any initial condition --- but this sacrifices the assignability of the parameter $c$. 

The design presented in this work allows the user to assign the value of $c$, achieving practical safety with an assignable attractivity rate, giving more control by the user to specify the dynamics of $h$. In addition, we also demonstrate local convergence of the parameter $\theta$ to a neighborhood of the constrained minimizer of the objective function on the safe set.

\subsection{Literature (ES, constrained ES, CBF safety filters)} 
Extremum seeking (ES) has evolved significantly since its first formal stability proof \cite{krstic2000stability}, with developments including handling of maps with multiple extrema \cite{tan2009global}, Newton-based ES \cite{ghaffari2012multivariable}, stochastic ES \cite{liu2015stochastic}, and more \cite{guay2015time,nesic2012framework}. Further innovations address stabilizing time-varying systems \cite{scheinker2012minimum,scheinker2014extremum}, applications to PDEs \cite{oliveira2016extremum}, fixed-time Nash equilibrium for networks \cite{poveda2022fixed}, among others \cite{scheinker2024100}. ES has also been used in situations with known constraints \cite{poveda2015shahshahani,tan2013extremum,scheinker2014extremum}.

Several works have explored ES-based solutions in constrained optimization with and an unknown constraint. The work \cite{guay2015constrained} modifies ES to estimate gradients of unknown functions, bypassing traditional averaging and singular perturbation theory. The Lie bracket formalism has been used \cite{labar2019constrained} to perform constrained ES approach, utilizing saddle point dynamics and an additional Lagrange-multiplier state. ES has been merged with ``boundary tracing" \cite{liao2019constrained}, drawing on nonlinear programming techniques. Authors in \cite{poveda2015shahshahani} present ES algorithms inspired by evolutionary game theory, addressing both equality and inequality constraints. Data-sampled ES techniques, such as those in \cite{hazeleger2022sampled} for steady-state constraint satisfaction and SPA stability, along with general optimization schemes explored in \cite{khong2013unified,teel2001solving}, highlight notable advances in unknown constraint handling. Previous work \cite{williams2023semi}, describes a semiglobal practically safe ES controller, with applications to particle accelerators \cite{williams2023experimental}, albeit with limited control over attractivity due to its semiglobal nature. 

First defined in \cite{Wieland} and later refined and popularized by the seminal papers \cite{AmesAutomotive,AmesCruiseControl}, QP-CBF formulations are often employed in a ``safety filter'' framework where they are used for generating safe control overrides for a potentially unsafe nominal controller. The QP-CBF methodology has found numerous application in control, \cite{WangMagnusMultiRobot,SantilloMulti,AmesCruiseControl,DSCC,CLARK2021Stochastic} to name only a small fraction of the body of work. Recent studies have explored safe set violations by disturbance magnitudes \cite{kolathaya2018input}. Inverse optimal safety filters for disturbances of both deterministic and stochastic natures have been designed \cite{krstic2023inverse}, without using the QP approach. Additionally, advancements in high-relative-degree CBFs \cite{krstic2006nonovershooting,hsu2015control,wu2015safety}, higher-order CBFs \cite{xiao2021bridging,krstic2006nonovershooting}, prescribed-time safety filters \cite{10288378}, and approximate optimal controllers integrating safety violations \cite{cohen2020approximate}, highlight diverse approaches to enhancing system safety and optimality.

\subsection{Results and Contribution}
The analysis we provide is based upon classical averaging and singular perturbation techniques. The QP-CBF based safety filters presented in \cite{ames2016control} are inherently nonsmooth, due to the term $\max \{x, 0\}$ appearing in the safety filter term. Therefore our design contains a smooth approximation of this quantity, $\maxapprox\{x\}$ --- a term which is inherently more conservative with respect to safety, and makes our algorithm more conservative than the design presented in \cite{williams2023semi}, yielding convergence to a neighborhood of the constrained equilibrium with a bias favoring safety (with conservativeness increasing with large $\delta$). Furthermore, our results are fundamentally local in nature relying on more restrictive assumption of $J$ and $h$ than those used in \cite{williams2023semi}, due to the analysis techniques used. The design also relies on a dynamic estimate of the quantity $||G_h||^{-2}$ by use of a Ricatti filter, reminiscent of \cite{ghaffari2012multivariable}. 

What is gained by the ASfES algorithm presented here over its semiglobal cousin \cite{williams2023semi} is that the user has enhanced control over the temporal behavior of $h$. Namely, we show that the value of $c$, the attractivity rate, in \eqref{eqn:prac_safety_ineq_intro} can be assigned by the user. We achieve: 1) local, practical asymptotic convergence to the constrained minimum of the objective function on the safe set 2) practical convergence to the safe set at least as fast as the assignable attractivity, for initial conditions which start unsafe 3) an approach towards the unsafe set, practically, no faster than the assigned attractivity for initial conditions which start safe.

Furthermore, we also present an extension to a Newton-Based ASfES (NB-ASfES) scheme which we show should only be used when considering a scalar parameter. NB-ASfES shares all of the features of ASfES with the additional feature that the nominal convergence rate of the parameter, $k$, is also assigned. Therefore we achieve an assignable rate of convergence of the parameter only if the assigned safety attractivity is not violated --- otherwise the parameter rate is slowed from its assigned rate in favor of maintaining the assigned safe attractivity rate.

This paper elucidates a tradeoff: one can either use slow adaptation and achieve semiglobality  \cite{williams2023semi} or, as shown here, assign the attractivity rate locally. 

The conference version of this work \cite{williams2022practically} studied the ASfES algorithm (see \cite{williams2022practically} for a intuitive derivation of the dynamics), but with analysis only given in 1 dimension. This work presents an extensive $n$ dimensional analysis. Other work in \cite{williams2023semi} studies a similar algorithm, which is semiglobal in nature, but has nonassignable attractivity. Practical safety is a feature shared by ASfES and the algorithm in \cite{williams2023semi}. Namely, that for an arbitrarily small violation of safety (the $O(\epsilon)$ term in \eqref{eqn:prac_safety_ineq_intro}), there exist design constants which guarantee it \emph{for all time}.

Organization: first we introduce the ASfES design and the main results in Sections \ref{sec:algo} and \ref{sec:main_results}, then we perform the averaging and convergence analysis in $n$ dimensions in Section \ref{sec:convergence}. In Section \ref{sec:safety_in_the_average}, the safety of the system is studied with the use of singular perturbation techniques, showing that the attractivity rate, $c$, can be assigned a prior by the user. Finally, Section \ref{sec:simulations} presents simulations of the algorithms for $1$ and $2$ dimensions.

\section{Algorithm} \label{sec:algo}
\subsection{Assignably Safe Extremum Seeking}
We will first introduce the set of differential equations describing the ASfES algorithm in $n$ dimensions:
\begin{align}
    \dot{\hat \theta} &= -k G_J + \gamma \maxapprox\{k G_J^T G_h - c \eta_h\} G_h \label{eqn:th_dyn}\\
    \dot G_J &= -\omegaf G_J + \omegaf(J( \hat \theta(t) + S(t) ) - \eta_J)M(t) \label{eqn:gj_dyn}\\
    \dot \eta_J &= -\omegaf \eta_J + \omegaf J( \hat \theta(t) + S(t) ) \label{eqn:etaj_dyn}\\
    \dot G_h &= -\omegaf G_h + \omegaf(h( \hat \theta(t) + S(t) ) - \eta_h)M(t) \label{eqn:gh_dyn}\\
    \dot \eta_h &= -\omegaf \eta_h + \omegaf h( \hat \theta(t) + S(t) ) \label{eqn:etah_dyn} \\
    \dot \gamma &= \omegaf  \gamma (1 - \gamma ||G_h||^2) \label{eqn:gamma_dyn}
\end{align}
where the state variables $\hat \theta, G_J, G_h \in \mathbb{R}^n$ and $\eta_J, \eta_h, \gamma \in \mathbb{R}$ so overall the dimension of the system is $3n+3$. The map is evaluated at $\theta$, defined by 
\begin{equation}
    \theta(t) = \hat \theta(t) + S(t) \; .
\end{equation}
The integer $n$ denotes the number of parameters one wishes to optimize over. The design coefficients are $k, c, \delta, \omegaf \in \mathbb{R}_{>0}$. The perturbation signal $S$ and demodulation signal $M$ are given by $S_i(t) = a \sin(\omega_i t)$ and $M_i(t) = \frac{2}{a} \sin(\omega_i t)$
the standard signals found in classical ES \cite{ariyur2003real}. The functions $J, h: \mathbb{R}^n \mapsto \mathbb{R}$ are scalar valued functions defined over the optimization variables $\theta$. 
To make the classical averaging theorem applicable, the algorithm must have a continuously differential right-hand side. Because the standard maximum operation is nonsmooth in the QP-CBF formulation we approximate it by the function $\maxapprox:\mathbb{R} \mapsto \mathbb{R}_{>0}$ given by
\begin{equation}\label{eq-maxdelta}
    \maxapprox\{x\} := \frac{1}{2}\left(x + \sqrt{x^2 + \delta}\right) \approx \max\{x,0\},
\end{equation}
for a small parameter $\delta>0$. Note that $\maxapprox\{x\} > \max\{x,0\}$ for all $x \in \mathbb{R}$. 

\begin{figure*}[t]
  \centering
  \includegraphics[bb=0 0 500 200, scale=0.75]{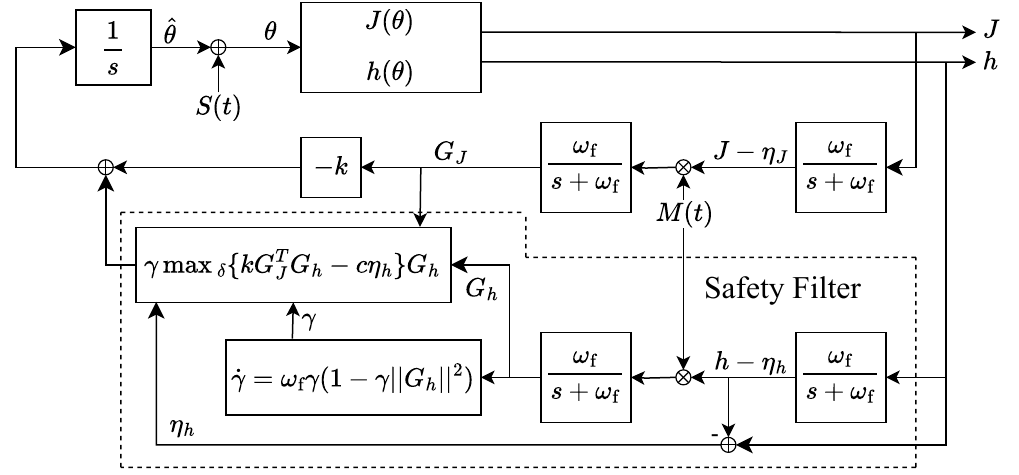}
  \caption{A block diagram of the ASfES algorithm. Removing the ``Safety Filter'' block recovers the classical extremum seeking algorithm.}
  \label{fig:block_diagram_ASfES}
\end{figure*}

\subsection{Notation and Coordinate Transformations}
The variable $\theta$, is transformed several times before averaging is performed, and then again to the equilibrium of the constrained system. We summarize the various coordinates in Table \ref{table:theta_transformations} for clarity, using ``param." to mean ``parameter", ``est." to mean ``estimate" or ``estimation", ``err." to mean ``error", and ``avg." to mean ``average".

\begin{table}[h!]
\centering
\caption{Transformations of the parameter $\theta$.}
\def\arraystretch{1.2}
\setlength{\tabcolsep}{3pt}
\begin{tabular}{||r l l||}
 \hline
 Var. & Definition &  Description \\ 
 [0.5 ex] 
 \hline\hline
 $\theta(t)$ & n/a  &  optimization param. (input to the map) \\
 $\hat \theta(t)$ & $ = \theta(t) - S(t)$  &  unconstrained param. est. \\
 $\tilde \theta(t)$ & $ =  \hat\theta(t) -\theta^*$ & unconstrained param. est. err. \\
 $\tilde \theta^{\rm a} (t)$ & avg. of $\tilde \theta(t)$ &  unconstrained avg. param est. err. \\
 $\tilde \theta^{\rm a, e}$ & eqm. of $\tilde \theta^{\rm a} (t)$ & eqm. of unconstrained avg. param. est. err. \\
 $\tilde \theta^{\rm a}_{\rm c} (t)$ & $ =  \tilde \theta^{\rm a} (t) -\tilde \theta^{\rm a, e}$ &  constrained avg. param. est. err. \\ [1ex] 
 \hline
\end{tabular}
\label{table:theta_transformations}
\end{table}

In unconstrained extremum seeking, the equilibrium $\tilde \theta^{\rm a,e} = 0$. This is not true for the dynamics we present. The final transformation in Table \ref{table:theta_transformations} defines a new state with equilibrium of zero $\tilde \theta^{\rm a,e}_{\rm c} = 0$. The rest of the states in the algorithm, $G_J$, $\eta_J$, $G_h$, $\eta_h$, and $\gamma$, undergo fewer transformations than $\theta$. 

We use the variable `$x$' to describe the full state of the solutions or equilibrium:
\begin{align}
    x &= [\theta; G_J; \eta_J; G_h; \eta_h; \gamma] ,\\
    \hat x &= [\hat \theta; G_J; \eta_J; G_h; \eta_h; \gamma] ,\\
    \tilde x &= [\tilde \theta; G_J; \eta_J; G_h; \eta_h; \gamma] ,\\
    x^{{\rm a}} &= [\tilde \theta^{\rm a}; G^{\rm a}_J; \eta^{\rm a}_J; G^{\rm a}_h; \eta^{\rm a}_h; \gamma^{\rm a}] ,\\
    x^{{\rm a, e}} &= [\tilde \theta^{\rm a, e}; G^{\rm a, e}_{J}; \eta^{\rm a, e}_{J}; G^{\rm a, e}_{h}; \eta^{\rm a, e}_{h}; \gamma^{\rm a, e}] , \\
    x^{{\rm a}}_c &= [\tthac ; \gjac ; \ehac ; \ghac ; \gac ; \ejac ] .
\end{align}
Notation: the superscript ``a'' always denotes a variable defined from an averaged system, and the superscript ``e'' always denotes an equilibrium value. For example, $x^{\rm a, e}$ is the equilibrium of the state $x^{\rm a}$, which is the averaged version of $\tilde x$ (shown in the next section). Here, ``averaged'' is in the sense in the standard averaging theory \cite{khalil}. The subscript ``c'' relate to the ``constrained'' dynamics of ASfES, shown later to be defined as $x^{\rm a}_c :=  x^{\rm a} -  x^{{\rm a, e}}$. 
Note that the ordering of the state variables have also been changed in $x^{\rm a}_c$, which will help identify block structures in a linearization procedure later on. Also, when stacking vectors $x\in \mathbb{R}^n$ and $y \in \mathbb{R}^m$ we denote $z = [x;y] := [x_1,...,x_n,y_1,...,y_m]^T \in \mathbb{R}^{n+m}$. We define the signum function as: $\sgn(x) := -1 \text{ for } x < 0, 0 \text{ for } x = 0, 1 \text{ for } x > 0$. We define the unit step function as: $u(x) := 0 \text{ for } x < 0, 0.5 \text{ for } x=0, 1 \text{ for } x > 0$.

\section{Main Results} \label{sec:main_results}
We consider the analysis of ASfES under the following assumptions.
\begin{assumption} \label{assum:1}
With the symmetric Hessian matrix $H \succ 0$ and unconstrained minimum $\theta^* \in \mathbb{R}^n$, the objective function is unknown and has the quadratic form 
\begin{equation}\label{eq-Jinto}
    J(\theta) = J^* + \frac{1}{2}(\theta - \theta^*)^T H (\theta - \theta^*).
\end{equation} 
\end{assumption}
\begin{assumption} \label{assum:2}
With $h_0 \in \mathbb{R}$ and $h_1 \in \mathbb{R}^n_{\neq 0}$, the barrier function is unknown and has the linear form 
\begin{equation}\label{eq-bintro}
    h(\theta) = h_0 + h_1^T (\theta - \theta^*).
\end{equation} 
\end{assumption}

Since the results are local, a linear form of the barrier function is assumed essentially with no loss of generality.
In \eqref{eq-bintro}, $h_0 \geq 0$ implies the unconstrained minimum of $J$ is safe, and $h_0<0$ implies the unconstrained minimum is unsafe.

\begin{assumption} \label{assum:3}
The design constants are chosen as $\omegaf, \omega_i, \delta,a, k,c > 0$, where $\omega_i \slash \omega_j$ are rational with frequencies $\omega_i$ chosen such that $\omega_i\neq \omega_j$ and $\omega_i + \omega_j \neq \omega_k$ for distinct $i, j,$ and $k$.
\end{assumption}

The first main result of this paper concerns convergence of the parameter to the (constrained) minimizer of the objective function $J$ on the safe set.
\begin{theorem} [Local Convergence to the Safe Optimum] \label{thm:convergence_avg}
Under Assumptions \ref{assum:1}--\ref{assum:3}, there exists positive constants $\rho, \omega^*$ such that if $||\tilde{x}(0) - x^{{\rm a, e}}|| < \rho$ 
then for all $\omega \in [\omega^*, \infty)$,
\begin{equation}
    \limsup_{t\rightarrow\infty} J(\theta(t)) = J_s^* + O(1/\omega + a + \delta),
\end{equation}
where $J_s^*$ is the minimum of $J(\theta)$ on the safe set $\mathcal{C} = \{\theta: h(\theta) \geq 0 \}$.
\end{theorem}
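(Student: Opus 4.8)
The plan is to prove Theorem~\ref{thm:convergence_avg} by combining a classical averaging step with a linearized-stability analysis of the averaged dynamics, and then converting the resulting closeness estimates into the stated bound on $\limsup_{t\to\infty}J(\theta(t))$.

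\textbf{Step 1 (Averaging).} First I would pass to the error variable $\tth=\thh-\theta^*$, stack $\tilde x$, write $\omega_i=\omega\bar\omega_i$ with the $\bar\omega_i$ fixed and rational (Assumption~\ref{assum:3}), and rescale time by $\tau=\omega t$, which puts \eqref{eqn:th_dyn}--\eqref{eqn:gamma_dyn} in the standard averaging form $dx/d\tau=(1/\omega)F(x,\tau)$ with $F$ periodic in $\tau$ and small parameter $\epsilon=1/\omega$. Using the quadratic/linear structure of $J,h$ (Assumptions~\ref{assum:1}--\ref{assum:2}) the required time-averages are elementary: $\overline{S}=0$, $\overline{J(\thh+S)M}=H\tth$, $\overline{h(\thh+S)M}=h_1$ (the cubic-in-$\sin$ terms vanishing under the frequency conditions), $\overline{J(\thh+S)}=J(\thh)+\tfrac{a^2}{4}\operatorname{tr}H$, $\overline{h(\thh+S)}=h(\thh)$, while the $\maxapprox$-term and the Riccati term depend on $t$ only through the state and average to themselves at the averaged state. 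This gives the averaged system $\dot G_h^{\rm a}=-\omegaf(G_h^{\rm a}-h_1)$, $\dot\gamma^{\rm a}=\omegaf\gamma^{\rm a}(1-\gamma^{\rm a}\|G_h^{\rm a}\|^2)$, $\dot G_J^{\rm a}=-\omegaf(G_J^{\rm a}-H\tth^{\rm a})$, $\dot\eta_h^{\rm a}=-\omegaf(\eta_h^{\rm a}-h_0-h_1^T\tth^{\rm a})$, $\dot\eta_J^{\rm a}=-\omegaf(\eta_J^{\rm a}-J^*-\tfrac12\tth^{{\rm a}T}H\tth^{\rm a}-\tfrac{a^2}{4}\operatorname{tr}H)$, and $\dot{\tth}^{\rm a}=-kG_J^{\rm a}+\gamma^{\rm a}\maxapprox\{kG_J^{{\rm a}T}G_h^{\rm a}-c\eta_h^{\rm a}\}G_h^{\rm a}$.

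\textbf{Step 2 (The averaged equilibrium is the constrained optimizer up to $O(\delta)$).} Zeroing the right-hand sides yields $G_h^{{\rm a,e}}=h_1$, $\gamma^{{\rm a,e}}=\|h_1\|^{-2}$, $G_J^{{\rm a,e}}=H\tth^{{\rm a,e}}$, $\eta_h^{{\rm a,e}}=h_0+h_1^T\tth^{{\rm a,e}}$, and $\tth^{{\rm a,e}}=\lambda H^{-1}h_1$ where $\lambda\ge0$ solves the scalar fixed-point equation $\lambda=\tfrac{1}{k\|h_1\|^2}\maxapprox\{\lambda(k\|h_1\|^2-ch_1^TH^{-1}h_1)-ch_0\}$. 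For $\delta=0$ (so $\maxapprox\to\max\{\cdot,0\}$) this is precisely the KKT stationarity of $\min J$ s.t.\ $h\ge0$: it forces $\lambda=0$, i.e.\ $\thh^{{\rm a,e}}=\theta^*$, when $h_0\ge0$, and $\lambda=-h_0/(h_1^TH^{-1}h_1)$, i.e.\ $\thh^{{\rm a,e}}=\theta_s^*=\theta^*-\tfrac{h_0}{h_1^TH^{-1}h_1}H^{-1}h_1$, when $h_0<0$. When $h_0\neq0$ the argument of $\maxapprox$ at this solution is bounded away from $0$ uniformly for small $\delta$ (it equals $-ch_0<0$ in the first case and $k\|h_1\|^2\cdot(-h_0/(h_1^TH^{-1}h_1))>0$ in the second), so $\maxapprox_\delta$ is smooth there in $(\delta,\cdot)$ and the implicit function theorem gives a branch $\lambda(\delta)=\lambda(0)+O(\delta)$; hence $\thh^{{\rm a,e}}=\theta_s^*+O(\delta)$, and expanding $J$ about $\theta_s^*$ gives $J(\thh^{{\rm a,e}})=J_s^*+O(\delta)$.

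\textbf{Step 3 (Exponential stability of $x^{{\rm a,e}}$ — the main obstacle).} The heart of the proof is showing the Jacobian of the averaged system at $x^{{\rm a,e}}$ is Hurwitz, which is exactly why the reordered, centered state $x^{\rm a}_c$ is introduced. The pair $(G_h^{\rm a},\gamma^{\rm a})$ is autonomous with a block-triangular, manifestly Hurwitz linearization (eigenvalues $-\omegaf$) feeding only forward, and $\eta_J^{\rm a}$ is a decoupled $-\omegaf$ output; so it suffices to analyze the $(\tth^{\rm a},G_J^{\rm a},\eta_h^{\rm a})$ subsystem. Here $\dot{\tth}^{\rm a}$ has no direct dependence on $\tth^{\rm a}$, and with $p:=\maxapprox'\{\cdot\}\in(0,1)$ the partial of the filter term in $G_J^{\rm a}$ at equilibrium is $\tfrac{pk}{\|h_1\|^2}h_1h_1^T$. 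Thus the ``slaved'' $\tth^{\rm a}$-dynamics ($G_J^{\rm a}=H\tth^{\rm a}$, $\eta_h^{\rm a}=h_0+h_1^T\tth^{\rm a}$) is the QP-CBF-filtered gradient flow with Jacobian $-kPH-Q$, $P=I-\tfrac{p}{\|h_1\|^2}h_1h_1^T\succ0$, $Q=\tfrac{pc}{\|h_1\|^2}h_1h_1^T\succeq0$; via Sherman--Morrison one writes $kPH+Q=P(kH+\tilde Q)$ with $\tilde Q=\tfrac{pc}{(1-p)\|h_1\|^2}h_1h_1^T\succeq0$, a product of two positive-definite matrices, whence its spectrum lies in the open right half-plane and $-kPH-Q$ is Hurwitz. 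The $-\omegaf$ filter coupling on $(G_J^{\rm a},\eta_h^{\rm a})$ is then absorbed by a composite Lyapunov function $V=e_1^TS_1e_1+\beta(\|e_2\|^2+e_3^2)$ in the filter-error coordinates $e_2=G_J^{\rm a}-H\tth^{\rm a}$, $e_3=\eta_h^{\rm a}-h_0-h_1^T\tth^{\rm a}$ (with $S_1$ solving $A_r^TS_1+S_1A_r=-I$, $A_r=-kPH-Q$), mirroring the classical extremum-seeking argument. This domination of the cross terms for the admissible range of constants is the step requiring the most care.

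\textbf{Step 4 (Conclusion).} Given exponential stability of $x^{{\rm a,e}}$ for the averaged system, the averaging theorem (e.g.\ \cite[Thm.~10.4]{khalil}), together with the local domain of attraction from Step~3, supplies constants $\omega^*,\rho>0$ such that for all $\omega\ge\omega^*$ and $\|\tilde x(0)-x^{{\rm a,e}}\|<\rho$ the solution $\tilde x(t)$ stays in and converges to an $O(1/\omega)$ neighborhood of $x^{{\rm a,e}}$; in particular $\thh(t)\to\thh^{{\rm a,e}}+O(1/\omega)$. Since $\theta(t)=\thh(t)+S(t)$ with $\|S(t)\|=O(a)$ and $J$ is $C^2$ with $\nabla J(\thh^{{\rm a,e}})=H\tth^{{\rm a,e}}=O(1)$, we get $\limsup_{t\to\infty}J(\theta(t))=J(\thh^{{\rm a,e}})+O(a+1/\omega)$, and combining with $J(\thh^{{\rm a,e}})=J_s^*+O(\delta)$ from Step~2 gives $\limsup_{t\to\infty}J(\theta(t))=J_s^*+O(1/\omega+a+\delta)$, as claimed.
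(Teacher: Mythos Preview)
Your overall strategy matches the paper's: average, locate the averaged equilibrium and show it is $O(\delta)$ from the constrained minimizer, prove local exponential stability of that equilibrium, then invoke Khalil's averaging theorem. Steps~1, 2, and~4 are essentially the paper's (it solves the equilibrium fixed-point explicitly rather than by the implicit function theorem, with the same $O(\delta)$ conclusion), and your factorization $kPH+Q=P(kH+\tilde Q)$ is exactly Lemma~\ref{lem:eig_real_pos} (Case~2), which the paper uses to show the \emph{reduced} matrix $MH+\tilde c\,h_1 h_1^T$ has real positive spectrum.

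The gap is in how you lift this to the full $(2n{+}1)$-dimensional $(\tthac,\gjac,\ehac)$ block. Your composite Lyapunov function $V=e_1^TS_1e_1+\beta(\|e_2\|^2+e_3^2)$ will not yield $\dot V<0$ for every $\omegaf>0$: since $\dot e_2=-\omegaf e_2-H\dot e_1$ and $\dot e_1$ contains $-Me_2$, the $e_2$-block of $\dot V$ carries $\beta\,e_2^T(HM+M^TH-2\omegaf I)e_2$, which is indefinite once $\omegaf$ is small relative to the spectrum of $HM$. Theorem~\ref{thm:convergence_avg} must hold for \emph{any} fixed $\omegaf>0$ (Assumption~\ref{assum:3} only asks $\omegaf>0$; only $\omega$ is taken large), so this is not a cosmetic omission. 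The paper closes it algebraically (Proposition~\ref{prop:eig_relation}): every eigenvalue $\lambda\neq-\omegaf$ of the block \eqref{J11_} satisfies $\lambda^2+\omegaf\lambda+\bar\lambda=0$ for some $\bar\lambda\in\sigma(\omegaf MH+\omegaf\tilde c\,h_1h_1^T)$, and since (by your own argument) each such $\bar\lambda$ is real and positive, Routh--Hurwitz gives Hurwitzness of the whole block for all $\omegaf>0$ with no cross-term domination required.
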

The local region $||\tilde{x}(0) - x^{{\rm a, e}}|| < \rho$ describes initial conditions which are close to the constrained minimizer. This implies $\hat \theta(0)$ must be close to $\theta^* + \tthae$ --- a quantity which is the global minimizer ($\theta^*$) and a large offset, describing the shift of the parameter into the safe set. Additionally the filtered estimates $G_J, \eta_J, G_h, \eta_h, \gamma$ must also lie close to the true values.

In the second main result, we show that the violation of safety of the original system is of order $O(1/\omega + 1/\omegaf + a)$ and the decay of the time dependent term has a time constant $c$, which is specified by the user and contained in the algorithm dynamics \eqref{eqn:th_dyn} - \eqref{eqn:gamma_dyn}. 

\begin{theorem} [Practical Safety with Assignable Attractivity Rate] \label{thm:assignable_prac_safety}
Under Assumptions \ref{assum:1}--\ref{assum:3}, there exists positive constants $\rho, \omegaf^*, \omega^*$ such that if $||\tilde{x}(0) - x^{{\rm a, e}}|| < \rho$, then for all $\omegaf \in [\omegaf^*, \infty)$ and $\omega \in [\omega^*, \infty)$,
\begin{align}\label{eq-safehthetac}
    h(\theta(t)) &\geq h(\theta(0)) {\rm e}^{-ct} + O(1/\omegaf + 1/\omega +a).
\end{align}
\end{theorem}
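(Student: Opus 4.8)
The plan is to prove the bound first in the doubly-reduced averaged system — obtained by letting $\omega\to\infty$ (averaging over the dither signals $S,M$) and then $\omegaf\to\infty$ (singular‑perturbing the washout and Riccati filters) — where the attractivity estimate holds \emph{exactly} with the user-chosen rate $c$, and then to transfer it back to $h(\theta(t))$ at the cost of $O(1/\omega+1/\omegaf+a)$.

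First I would recast \orgsys in the coordinate $\tth=\thh-\theta^*$; the right-hand side is continuously differentiable (thanks to the surrogate $\maxapprox$) and generalized-periodic under Assumption~\ref{assum:3}, so the averaging theorem of \cite{khalil} applies. The analysis behind Theorem~\ref{thm:convergence_avg} supplies an exponentially stable equilibrium $x^{\rm a,e}$ of the averaged system, so that for $\|\tilde x(0)-x^{\rm a,e}\|<\rho$ one has $\|\tilde x(t)-x^{\rm a}(t)\|=O(1/\omega)$ uniformly on $[0,\infty)$ with $x^{\rm a}(0)=\tilde x(0)$. Within $x^{\rm a}$ the filters $(G_J,\eta_J,G_h,\eta_h,\gamma)$ are fast (time constant $1/\omegaf$) relative to $\tth^{\rm a}$ (rate $O(k)$), with an exponentially stable boundary layer — the $(G_J,\eta_J,G_h,\eta_h)$-block is linear with eigenvalue $-\omegaf$, and $\gamma'=\gamma(1-\gamma\|h_1\|^2)$ has the exponentially stable root $\gamma=1/\|h_1\|^2$ — so Tikhonov's theorem gives, uniformly on $[0,\infty)$, a reduced slow trajectory $\tth^{\rm a}_r$ with $\tth^{\rm a}_r(0)=\tth^{\rm a}(0)$ and $h(\thh^{\rm a}(t))=h(\thh^{\rm a}_r(t))+O(1/\omegaf)$. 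For the quadratic $J$ and affine $h$ the quasi-steady-state values are exactly $G_J=H\tth^{\rm a}_r$, $G_h=h_1$, $\eta_h=h_0+h_1^T\tth^{\rm a}_r=h(\thh^{\rm a}_r)$, $\gamma=1/\|h_1\|^2$ (the demodulation cross-terms vanish by Assumption~\ref{assum:3}), yielding the reduced dynamics $\dot{\tth}^{\rm a}_r=-kH\tth^{\rm a}_r+\|h_1\|^{-2}\maxapprox\{k\, h_1^TH\tth^{\rm a}_r-c\,h(\thh^{\rm a}_r)\}h_1$.

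Setting $y_r:=h(\thh^{\rm a}_r)=h_0+h_1^T\tth^{\rm a}_r$ and projecting the reduced dynamics onto $h_1$ gives $\dot y_r=-k\,h_1^TH\tth^{\rm a}_r+\maxapprox\{k\,h_1^TH\tth^{\rm a}_r-cy_r\}$. Since $\maxapprox\{z\}=\tfrac{1}{2}(z+\sqrt{z^2+\delta})\ge\max\{z,0\}\ge z$ for all $z$, we get $\dot y_r\ge -cy_r$, so the comparison lemma yields $y_r(t)\ge y_r(0)\,{\rm e}^{-ct}$: the attractivity with the assigned rate $c$ holds exactly in the reduced system, with $\delta$ only enlarging the surrogate and hence never violating it. To finish, note that $h$ affine and $S=O(a)$ give $h(\theta(t))=h(\thh(t))+O(a)$, while $S(0)=0$ makes all the initial values agree: $h(\theta(0))=h(\thh(0))=h(\thh^{\rm a}(0))=h(\thh^{\rm a}_r(0))=y_r(0)$. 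Chaining the three estimates and using ${\rm e}^{-ct}\le 1$,
\begin{align}
h(\theta(t)) &\ge h(\thh^{\rm a}_r(t)) - O(1/\omega+1/\omegaf+a) \notag \\
&\ge y_r(0)\,{\rm e}^{-ct} - O(1/\omega+1/\omegaf+a) = h(\theta(0))\,{\rm e}^{-ct} + O(1/\omega+1/\omegaf+a),
\end{align}
which is \eqref{eq-safehthetac}.

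The main obstacle is ensuring the averaging and singular-perturbation error estimates are uniform on the infinite horizon $[0,\infty)$ and compose so that $\omega^*$ can be chosen independently of $\omegaf\in[\omegaf^*,\infty)$: this requires the exponential stability of $x^{\rm a,e}$ and of the reduced slow equilibrium furnished by Theorem~\ref{thm:convergence_avg}, together with the observation that the averaging constant does not degrade as $\omegaf\to\infty$ because the filters' contraction rate and the Lipschitz constant it must dominate both scale like $\omegaf$. A secondary technical point is the boundary-layer analysis of the $\gamma$-Riccati filter, where $\|h_1\|\ne 0$ (Assumption~\ref{assum:2}) is precisely what makes $\gamma=\|h_1\|^{-2}$ well defined and exponentially attractive.
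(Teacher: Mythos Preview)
Your proposal is correct and follows essentially the same route as the paper: average first (Proposition~\ref{prop:closeness_avg}), then apply Tikhonov singular perturbation with $\epsilon=1/\omegaf$ to the averaged system (Proposition~\ref{prop:sing_pert}), verify the exact inequality $\dot h+ch>0$ in the reduced model (Proposition~\ref{prop:safety_of_reduced}), and chain the three $O(\cdot)$ estimates through the Lipschitz $h$. Your explicit use of $S(0)=0$ to equate all initial values and your flagging of the $\omega^*$-independent-of-$\omegaf$ issue are refinements the paper leaves implicit, but the overall architecture is identical.
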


\section{Convergence} \label{sec:convergence}
The next subsections present the main steps in deriving Theorem \ref{thm:convergence_avg}. First an average system is derived, with a unique equilibrium, and then linearization is performed to conclude the convergence result.

\subsection{Deriving the Average System} \label{sec:averaging}
With the change of variables, introducing $\hat \theta$ and $\tilde \theta$, we have the relations 
\begin{align}
\tilde\theta(t) &=  \hat\theta(t) -\theta^*\\
\theta(t) &=  \hat\theta(t) + S(t) = \tilde\theta(t) + S(t) +\theta^*\\
\theta(t)-\theta^* &=  \tilde\theta(t) +S(t)
\end{align}
where 
\begin{equation}
{S}_i(t) = a \sin(\omega_i t),
    \end{equation}
and with the time transformation $\tau = \omega t$ we have
\begin{align}
    \omega \frac{d \tilde{x}}{d\tau}    
    &=
    \begin{bmatrix*}[l]
     -k G_J + \gamma \maxapprox\{k G_J^T G_h - c \eta_h\} G_h \\
    -\omegaf G_J + \\
    \quad \omegaf(J(\tilde \theta + S(\frac{\tau}{\omega}) +\theta^*) - \eta_J)M(\frac{\tau}{\omega})  \\
    -\omegaf \eta_J + \omegaf J(\tilde \theta + S(\frac{\tau}{\omega}) +\theta^*) \\
    -\omegaf G_h + \\
    \quad \omegaf(h(\tilde \theta + S(\frac{\tau}{\omega}) +\theta^*) - \eta_h)M(\frac{\tau}{\omega})  \\
    -\omegaf \eta_h + \omegaf h(\tilde \theta + S(\frac{\tau}{\omega}) +\theta^*) \\
    \omegaf  \gamma (1 - \gamma ||G_h||^2)
    \end{bmatrix*} \\
    &=f(\tilde x, \tau). \label{eqn:sys_transformed}
\end{align}
Now the system is in the correct form to perform averaging. We express the the signals $S$ and $M$ with frequencies scaled by $\omega$ as  $S_i (\frac{\tau}{\omega}) = a \sin(\omega_i' \tau)$ and $M_i(\tau) = \frac{2}{a} \sin(\omega_i' \tau)$, with $\omega_i := \omega \omega_i'$. We compute the average of the right-hand side  given by the formula \eqref{eqn:sys_transformed}, taking the period to be $\Pi$ given as
 
\begin{equation}
    \Pi = 2 \pi \times \text{LCM} \left\{   \frac{1}{\omega_i'}\right\}, \quad i \in \{1,2,...,n\},
\end{equation}
where LCM denotes the least common multiple. We compute the average system as
\begin{align}
   f^{\textup{a}}(x^{\rm a}) &:= \frac{1}{\Pi} \int_{0}^{\Pi} f(x^{\rm a}, \tau) d \tau, \label{eqn:avg_general} \\
   x^{{\rm a}} &:= [\tilde \theta^{\rm a}; G^{\rm a}_J; \eta^{\rm a}_J; G^{\rm a}_h; \eta^{\rm a}_h; \gamma^{\rm a}].
\end{align}
Using classical averaging \cite{khalil}, we arrive at the average system
\begin{align}
    \omega \frac{d x^{\rm a}}{d\tau} 
    &=
    \begin{bmatrix*}[l]
     -k \gja + \ga \maxapprox\{k \gjaT \gha - c \eha\} \gha \\
    -\omegaf \gja + \omegaf H \ttha  \\
    -\omegaf \eja + \omegaf (J(\ttha + \theta^*) + \frac{a^2}{4} \text{Tr}(H)) \\
    -\omegaf \gha + \omegaf h_1  \\
    -\omegaf \eha + \omegaf h(\ttha + \theta^*) \\
    \omegaf  \ga (1 - \ga ||\gha||^2)
    \end{bmatrix*}\\ 
    &=  f^{\textup{a}}(x^{\rm a}). \label{eqn:avg_sys}
\end{align}
\begin{figure}[t]
  \centering
  \includegraphics[width=\columnwidth]{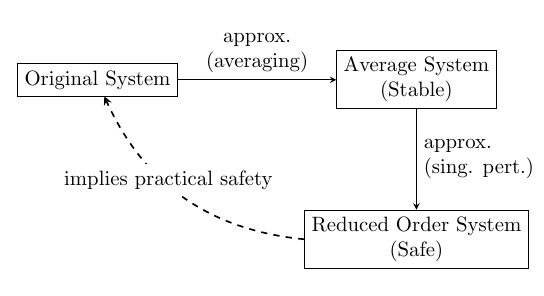}
  \caption{Road map of the analysis steps for ASfES. The approximation between the original system and average is performed via linearization of the average system (Section \ref{sec:convergence}) and the approximation between the average system and the reduced order system is performed via singular perturbation (Section \ref{sec:safety_in_the_average}) by taking $\omegaf$ large.}
  \label{fig:analysis_roadmap}
\end{figure}
\subsection{Equilibrium of the Average System} \label{sec:eqm}
We find the equilibrium of the average system \eqref{eqn:avg_sys}, taking the positive equilibrium point of $\ga$. Immediately we calculate the following equilibria 
\begin{align}
    \ejae  & = J^* + \frac{1}{2} \tthaeT H \tthae + \frac{a^2}{4} \text{Tr}(H) \\
    \ghae  &= h_1 \\
    \gae   &= \frac{1}{|| h_1 ||^2}. \label{eqn:eqm_gamma1}
\end{align}
for three components of the system state. And we have the following relations:
\begin{align}
     0 & = -k \gjae + \gae \maxapprox\{k \gjaeT \ghae - c \ehae\} \ghae \label{eqn:eqm1}\\
    \gjae &= H \tthae \\
    \tthae &= H^{-1}\gjae \\
    \ehae & = h(\theta^* + \tthae) = h_0 + h_1^T \tthae .
\end{align}
From \eqref{eqn:eqm1} we have the following quadratic vector equation in $\gjae$
\begin{align}
    k || h_1 ||^2 \gjae &= h_1 \maxapprox(k \gjaeT h_1 - c h_0 - c h_1^T H^{-1} \gjae). \label{eqn:quad_GJ_1}
\end{align}

To solve \eqref{eqn:quad_GJ_1}, we use the following fact: for some $\nu > 0$ we have 
\begin{equation}
    k || h_1 ||^2 \gjae = h_1 \nu
\end{equation}
because $\maxapprox(x) > 0$. Writing $\gjae = h_1 \frac{\nu}{k ||h_1||^2}$, and using the definition of $\maxapprox$, we achieve the expansion of \eqref{eqn:quad_GJ_1}:
\begin{align}
    h_1 \nu  &= h_1 \frac{1}{2} (\nu- c h_0 - d \nu) + \nonumber \\ & \qquad h_1  \frac{1}{2} \sqrt{(\nu - c h_0 - d \nu)^2 + \delta}
\end{align}
letting the quantity $d>0$ (because $H\succ0$) be
\begin{equation}
    d = \frac{c}{k ||h_1||^2} h_1^T H^{-1} h_1.
\end{equation}
After solving for the positive solution of the quadratic equation in $\nu$, the (unique) equilibrium of the average system can be written as below
\begin{align}
    \tthae &= H^{-1}\gjae \label{eqn:eqm_theta} , \\
    \gjae  &= \frac{|h_0|}{2 h_1^T H^{-1} h_1} \left(-\sgn(h_0) + \sqrt{1 + \delta \frac{d}{c^2 h_0^2}} \right) h_1 , \\
    \begin{split}
        \ejae &= J^* + \frac{a^2}{4} \text{Tr}(H) + \\ & \quad \frac{h_0^2}{8 h_1^T H^{-1} h_1} \left(-\sgn(h_0) + \sqrt{1 + \delta \frac{d}{c^2 h_0^2}} \right)^2 , 
    \end{split} \\
    \ghae  &= h_1, \\
    \ehae  & = \frac{|h_0|}{2}\left(\sgn(h_0)+  \sqrt{1 + \delta \frac{d}{c^2 h_0^2}}\right), \label{eqn:eqm_eta_h} \\
    \gae   &= \frac{1}{|| h_1 ||^2}, \label{eqn:eqm_gamma}.
\end{align}

We see that no matter the sign of $h_0$, this equilibrium always lies in the interior of the safe region because $\ehae = h(\theta^* + \tthae)> 0$. 
Also, we can think of $\ejae$ as the equilibrium value of the filtered measurement of the objective function. It settles to $J^*$ with a small error due to the dithering amplitude, and a possibly large offset if $h_0<0$, resulting from the safety dynamics constraining the equilibrium to the safe set.

\subsection{Linearization} \label{sec:linearization}
Using the in error variables of the average system (to the constrained equilibrium) we define the following coordinates with new ordering as
\begin{align}
x^{{\rm a}}_c(t) &:= [\ttha (t) - \tthae ; \gja (t) - \gjae; \eha (t) - \ehae; \nonumber \\ &\qquad \gha (t) - \ghae; \ga(t) - \gae; \eja (t) - \ejae] \nonumber \\
&= [\tthac (t); \gjac (t); \ehac (t); \ghac (t); \gac(t);  \ejac (t)], \label{eqn:avg_error_c}
\end{align}
and the system
\begin{equation}
    \omega \frac{d x^{\rm a}_c}{d\tau} = g(x^{\rm a}_c), \label{eqn:avg_error_c_dynamics}
\end{equation}
from \eqref{eqn:avg_sys}. We have changed the ordering of the dynamic equations to help identify block triangular structures and have therefore have defined the system under with a new function $g$, instead of $f^{\textup{a}}$. For example, $\omega \dot{G}_{h,{\rm c}}^{\rm a} = g_3(x^{\rm a}_c) = -\omegaf (\ghac + \ghae )+ \omegaf h_1$, with a slight abuse of notation as $g_i$ denotes the the vector valued function associated with the dynamics of the $i$th vectored valued components of $x^{{\rm a}}_c(t)$ listed in \eqref{eqn:avg_error_c}.

The Jacobian matrix of \eqref{eqn:avg_error_c_dynamics} which is $J = \frac{\partial g}{\partial x^{\rm a}_c}$, is a matrix of size ${(3n+3) \times (3n+3)}$ with structure:
\begin{align}
J &=
\left[ 
\arraycolsep=2.0pt\def\arraystretch{2.0} 
\begin{array}{c c c ;{2pt/2pt} c c ;{2pt/2pt} c }
0 & \dfrac{\partial g_1}{\partial \gjac} &\dfrac{\partial g_1}{\partial \ehac} &\dfrac{\partial g_1}{\partial \ghac} &\dfrac{\partial g_1}{\partial \gac} & 0 \\
\dfrac{\partial g_2}{\partial \tthac} & \dfrac{\partial g_2}{\partial \gjac} & 0 & 0 & 0 & 0\\
\dfrac{\partial g_3}{\partial \tthac} & 0 &\dfrac{\partial g_3}{\partial \ehac} & 0 & 0 & 0\\ \hdashline[2pt/2pt]
0 & 0 & 0 &\dfrac{\partial g_4}{\partial \ghac} & 0  & 0\\
0 & 0 & 0 &\dfrac{\partial g_5}{\partial \ghac} &\dfrac{\partial g_5}{\partial \gac} & 0 \\ \hdashline[2pt/2pt]
\dfrac{\partial g_6}{\partial \tthac} & 0 & 0 & 0 & 0 & \dfrac{\partial g_6}{\partial \ejac}
\end{array} \right] \label{J} 
\end{align}
Evaluating at the equilibrium yields
\begin{equation}
 \left. {J} \right|_{x^{\rm a}_c=0} =
\left[ \begin{array}{c c c }
\left. {J_{11}} \right|_{x^{\rm a}_c=0} & \left. {J_{12}} \right|_{x^{\rm a}_c=0} & 0\\
0 &  \left. {J_{22}} \right|_{x^{\rm a}_c=0} & 0  \\
\left. {J_{31}} \right|_{x^{\rm a}_c=0} & 0 &  \left. {J_{33}} \right|_{x^{\rm a}_c=0}
\end{array} \right]. \label{J_eval}
\end{equation}
The eigenvalues of this block triangular matrix are the eigenvalues of the diagonal blocks. Therefore we analyze $J_{11}$ of size $(2n+1) \times (2n+1)$, $J_{22}$ of size $(n+1)\times (n+1)$, and the scalar $J_{33}$ separately. We can easily compute 
\begin{equation}
    \left. {J_{33}}
 \right|_{x^{\rm a}_c=0} = \left. \dfrac{\partial g_6}{\partial \ejac} \right|_{x^{\rm a}_c=0} =-\omegaf
\end{equation}
and the block $J_{22}$ evaluated at the origin can
be written as
\begin{equation}
 \left. {J_{22}}
 \right|_{x^{\rm a}_c=0} =
 \left[
    \begin{array}{c c}
    - \omegaf I  & 0 \\
    l & - \omegaf
    \end{array} 
    \right] \label{J22}
\end{equation}
where $l$ need not be calculated. Because this matrix is itself block triangular, it's eigenvalues lie on the diagonal and are $\lambda_i = -\omegaf$ for $i \in \{1, 2, ..., n+1\}$, therefore \eqref{J22} is Hurwtiz. We have discovered $2n+1$ eigenvalues equal to $-\omegaf$.

After careful algebra and differentiation we express
\begin{equation}
\left. {J_{11}}
 \right|_{x^{\rm a}_c=0} =
    \begin{bmatrix}
    0 && -M && - c \alpha|| h_1 ||^{-2} h_1 \\
    \omegaf H && -\omegaf I  && 0 \\
    \omegaf h_1^T && 0 && -\omegaf
    \end{bmatrix}, \label{J11_}
\end{equation}
with the symmetric, positive definite matrix $M \succ 0$ and the scalar $0 < \alpha < 1$ defined as
\begin{align}
    M &= k \left( I -  \alpha  \frac{h_1 h_1^T}{|| h_1 ||^2} \right) , \label{eqn:M}\\
    \alpha &= \frac{1}{2} \left( \frac{k c_1|| h_1 ||^2 - c \ehae}{\sqrt{(k c_1|| h_1 ||^2 - c \ehae)^2 + \delta}} + 1\right), \\ \label{eqn:c3}
    c_1 &= \frac{|h_0|}{2 h_1^T H^{-1} h_1} \left(-\sgn(h_0) + \sqrt{1 + \delta \frac{d}{c^2 h_0^2}} \right).      
\end{align}

We provide an extensive analysis of matrices in the form of \eqref{J11_} in Lemma \ref{lem:all_properties_X} the appendix, but present the most essential result below.
\begin{corollary}
Under Assumptions \ref{assum:1}--\ref{assum:3} and for $\alpha \in [0,1]$, the matrix $\left. {J_{11}}
 \right|_{x^{\rm a}_c=0}$ in \eqref{J11_} is always Hurwtiz, and therefore the Jacobian of the constrained average error system $\left. {J_{11}}
 \right|_{x^{\rm a}_c=0}$ in \eqref{J_eval} is always Hurwtiz. \label{corol:hurwitz}
\end{corollary}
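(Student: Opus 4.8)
The plan is to prove the two assertions in turn: first that the block $\left.J_{11}\right|_{x^{\rm a}_c=0}$ in \eqref{J11_} is Hurwitz (the general version of this being Lemma~\ref{lem:all_properties_X}), and then that the full Jacobian at the equilibrium in \eqref{J_eval} inherits this. For the first, I would start from the $2\times2$ block form of $\left.J_{11}\right|_{x^{\rm a}_c=0}$: upper-left block $0$, upper-right block $B$ (the $n\times(n+1)$ matrix whose first $n$ columns are $-M$ and whose last column is $-c\alpha\|h_1\|^{-2}h_1$), lower-left block $\omegaf C$ with $C=[H;\,h_1^T]$, and lower-right block $-\omegaf I_{n+1}$. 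Taking the Schur complement of the lower-right block (which is invertible whenever $\lambda\neq-\omegaf$) and using $BC=-MH-c\alpha\|h_1\|^{-2}h_1h_1^T=:-N$, one obtains the characteristic polynomial $\det\!\big(\lambda I-\left.J_{11}\right|_{x^{\rm a}_c=0}\big)=(\lambda+\omegaf)\,\det\!\big(\lambda^2 I_n+\omegaf\lambda I_n+\omegaf N\big)$. Hence the $2n+1$ eigenvalues of $\left.J_{11}\right|_{x^{\rm a}_c=0}$ are $-\omegaf$, together with the two roots of $\lambda^2+\omegaf\lambda+\omegaf\mu=0$ for each eigenvalue $\mu$ of $N$.

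The decisive step --- and the one I expect to be the main obstacle --- is showing that every eigenvalue $\mu$ of $N=MH+c\alpha\|h_1\|^{-2}h_1h_1^T$ is real and strictly positive, which is not obvious since $MH$ is a product of two non-commuting symmetric positive-definite matrices perturbed by a rank-one term. The observation that makes it work is that $M=k\big(I-\alpha h_1h_1^T/\|h_1\|^2\big)$ from \eqref{eqn:M} has $h_1$ as an eigenvector with eigenvalue $k(1-\alpha)>0$, so $M^{-1}h_1=h_1/(k(1-\alpha))$ and hence $M^{-1}N=H+\tfrac{c\alpha}{k(1-\alpha)\|h_1\|^2}h_1h_1^T$ is symmetric and positive definite ($H\succ 0$ plus a positive-semidefinite rank-one term, using $0\le\alpha<1$ and $k,c>0$). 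Then $N=M\,(M^{-1}N)$ is a product of two symmetric positive-definite matrices, so it is similar to $M^{1/2}(M^{-1}N)M^{1/2}\succ 0$ and therefore has a real, strictly positive spectrum. With $\mu>0$ and $\omegaf>0$ the quadratic $\lambda^2+\omegaf\lambda+\omegaf\mu$ has strictly positive coefficients, hence is Hurwitz by the Routh--Hurwitz criterion; combined with the eigenvalue $-\omegaf<0$, this shows $\left.J_{11}\right|_{x^{\rm a}_c=0}$ is Hurwitz for \emph{every} admissible choice of $k,c,\delta,\omegaf,a,\omega_i$ and of $H\succ 0$, $h_1\neq 0$ --- in contrast to the singular-perturbation arguments elsewhere in the paper, no gain has to be taken small or large. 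The degenerate boundary value $\alpha=1$, where $M$ is only positive semidefinite, I would treat separately: there $M$ equals $k$ times the orthogonal projector onto $h_1^{\perp}$, $N$ becomes block triangular with respect to the splitting $h_1^{\perp}\oplus\operatorname{span}(h_1)$, and the diagonal blocks are a positive-definite compression of $H$ and the positive scalar $c$; alternatively one appeals to continuity of the eigenvalues in $\alpha$.

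For the second assertion, the Jacobian in \eqref{J_eval} is block triangular up to a symmetric permutation of its block rows and columns, so, as already noted in the text, its spectrum is the union of the spectra of $\left.J_{11}\right|_{x^{\rm a}_c=0}$, $\left.J_{22}\right|_{x^{\rm a}_c=0}$ and $\left.J_{33}\right|_{x^{\rm a}_c=0}$. Since $\left.J_{33}\right|_{x^{\rm a}_c=0}=-\omegaf<0$ and $\left.J_{22}\right|_{x^{\rm a}_c=0}$ in \eqref{J22} is itself triangular with every eigenvalue equal to $-\omegaf<0$, combining these with the Hurwitzness of $\left.J_{11}\right|_{x^{\rm a}_c=0}$ yields that the full Jacobian in \eqref{J_eval} is Hurwitz, as claimed.
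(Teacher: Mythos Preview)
Your proof is correct and follows essentially the same strategy as the paper (Proposition~\ref{prop:eig_relation}, Lemma~\ref{lem:eig_real_pos}, and Lemma~\ref{lem:all_properties_X} in the Appendix): reduce the spectrum of the $(2n{+}1)\times(2n{+}1)$ block to that of $N=MH+c\alpha\|h_1\|^{-2}h_1h_1^T$, show $N$ has real strictly positive eigenvalues via the similarity $M^{-1/2}NM^{1/2}$ (for $\alpha<1$), and treat $\alpha=1$ by a block-triangular decomposition along $h_1^{\perp}\oplus\operatorname{span}(h_1)$. Your Schur-complement derivation of the characteristic polynomial and your observation that $h_1$ is an eigenvector of $M$ (so $M^{-1}N$ is symmetric positive definite directly) are slightly cleaner than the paper's eigenvector manipulation plus separate determinant computation, but the content is the same; one small caveat is that your ``alternatively, continuity in $\alpha$'' remark for $\alpha=1$ would only yield $\mu\ge 0$, so the block-triangular argument you give first is the one that actually closes the case.
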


This result is slightly more general than what absolutely necessary for the convergence analysis because it gives results for $\alpha$ potentially being equal to zero or one, even though the assumption of $\delta>0$ precludes this possibility. This is still nonetheless useful knowledge because it tells us that as $\delta \to 0$, the eigenvalues of the linearization do not tend to undesirable values and provides hope for the future in developing a nonsmooth version of this algorithm with $\delta = 0$.

From Theorem 10.4 in \cite{khalil}, Corollary \ref{corol:hurwitz}, and Proposition \ref{prop:const_min} given in the appendix, we conclude the following results.

\begin{proposition} [Approximation by the Average System] \label{prop:closeness_avg}
Consider the exponentially stable equilibrium point $x^{{\rm a, e}}$ \eqref{eqn:eqm_theta}--\eqref{eqn:eqm_gamma} of the average system in \eqref{eqn:avg_sys}. Under Assumptions \ref{assum:1}--\ref{assum:3}, there exists positive constants $\rho, \omega^*$ such that if $||\tilde{x}(0) - x^{{\rm a, e}}|| < \rho$ 
then for all $\omega \in [\omega^*, \infty)$,
\begin{align} \label{eq-xav-xhat}
||x^{\rm a}(t) - \tilde{x}(t)|| &= O (1/\omega) \; \text{for all }t \in [0, \infty), \\
||\theta(t) - (\tilde{\theta}^{\rm a}(t) + \theta^*)|| &= O (1/\omega+a) \; \text{for all }t \in [0, \infty). 
\end{align}
\end{proposition}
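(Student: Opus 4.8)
The plan is to obtain both estimates as a direct application of the classical averaging theorem for systems whose averaged dynamics have an exponentially stable equilibrium (Theorem~10.4 in \cite{khalil}); the substantive work --- deriving the average system \eqref{eqn:avg_sys}, locating its unique equilibrium $x^{\rm a,e}$ in Section~\ref{sec:eqm}, and establishing that its linearization is Hurwitz in Corollary~\ref{corol:hurwitz} --- is already in place.

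First I would check that the $\tau$-scaled dynamics \eqref{eqn:sys_transformed}, rewritten as $\frac{d\tilde{x}}{d\tau} = \frac{1}{\omega} f(\tilde{x},\tau)$, are in the standard averaging form with small parameter $\varepsilon := 1/\omega$. By Assumption~\ref{assum:3} the scaled frequencies $\omega_i'$ have rational ratios, so $f(\cdot,\tau)$ is $\Pi$-periodic in $\tau$; and because $J$ is quadratic, $h$ is linear, $\maxapprox\{\cdot\}$ is $C^\infty$, and $S,M$ are bounded sinusoids, $f$ and its Jacobian with respect to $\tilde{x}$ are continuous and bounded --- hence $f$ is locally Lipschitz in $\tilde{x}$ uniformly in $\tau$ --- on any compact set, and the period-average of $f$ is precisely $f^{\textup{a}}$ of \eqref{eqn:avg_sys}.

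Next I would record that $x^{\rm a,e}$ of \eqref{eqn:eqm_theta}--\eqref{eqn:eqm_gamma} is an equilibrium of $f^{\textup{a}}$ and that, by Corollary~\ref{corol:hurwitz}, the Jacobian $\left.J\right|_{x^{\rm a}_c=0}$ of \eqref{J_eval} is Hurwitz (the nontrivial block being $\left.J_{11}\right|_{x^{\rm a}_c=0}$ of \eqref{J11_}); by Lyapunov's indirect method this makes $x^{\rm a,e}$ exponentially stable for the averaged flow. Theorem~10.4 in \cite{khalil} then supplies $\rho>0$ and $\varepsilon^*>0$ --- set $\omega^* := 1/\varepsilon^*$ --- such that for $\|\tilde{x}(0)-x^{\rm a,e}\|<\rho$ and $\omega\ge\omega^*$, the solution $\tilde{x}(\tau)$ exists for all $\tau\ge0$, remains in a fixed compact neighbourhood of $x^{\rm a,e}$, and satisfies $\|\tilde{x}(\tau)-x^{\rm a}(\tau)\| = O(\varepsilon) = O(1/\omega)$ uniformly in $\tau\in[0,\infty)$, where $x^{\rm a}$ is the averaged solution from the same initial state. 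Reverting to $t$ via $\tau=\omega t$ yields the first bound in \eqref{eq-xav-xhat}. For the second, $\theta(t)=\tilde{\theta}(t)+\theta^*+S(t)$ gives $\|\theta(t)-(\ttha(t)+\theta^*)\| \le \|\tilde{x}(t)-x^{\rm a}(t)\| + \|S(t)\| = O(1/\omega + a)$ since $\|S(t)\|\le a\sqrt{n}$; Proposition~\ref{prop:const_min} is then invoked to identify $x^{\rm a,e}$ with a neighbourhood of the constrained minimizer, which is what gives the estimates their intended meaning.

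The main obstacle is the extension from a finite to an infinite time horizon: a straightforward Gronwall argument bounds the averaging error only on intervals of length $O(1/\varepsilon)$, and the uniform-in-time estimate rests entirely on the exponential stability of $x^{\rm a,e}$ delivered by Corollary~\ref{corol:hurwitz} --- without the Hurwitz property of the coupled block $\left.J_{11}\right|_{x^{\rm a}_c=0}$, which encodes the feedback between the gradient estimators and the safety-filter term, the infinite-horizon claim would fail. The remaining details --- taking $\rho$ independent of $\omega$, and confirming the invariance of a compact set on which the Lipschitz constants are uniform and on which $\gamma^{\rm a}$ stays near the strictly positive value $\gae$ --- are routine thanks to the polynomial structure of $J$ and $h$.
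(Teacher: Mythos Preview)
Your proposal is correct and follows essentially the same route as the paper: the paper's own justification for Proposition~\ref{prop:closeness_avg} is the single sentence ``From Theorem 10.4 in \cite{khalil}, Corollary \ref{corol:hurwitz}, and Proposition \ref{prop:const_min} given in the appendix, we conclude the following results,'' and you have simply unpacked that citation --- checking the standard-form hypotheses, invoking the Hurwitz linearization for exponential stability, and reading off the $O(1/\omega)$ bound on the infinite interval, then appending the $O(a)$ perturbation from $S(t)$ via the triangle inequality. One minor remark: Proposition~\ref{prop:const_min} is not actually needed for Proposition~\ref{prop:closeness_avg} itself (it identifies the constrained minimum and is used only downstream in the proof of Theorem~\ref{thm:convergence_avg}), so you can safely drop that last sentence.
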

The  equilibrium of the average system \eqref{eqn:avg_sys}, given in \eqref{eqn:eqm_theta}--\eqref{eqn:eqm_gamma}, resides in the interior of the safe set regardless of whether the unconstrained minimizer $\theta_{\rm min}$ of $J$ lies in the safe set or not. One can show this by evaluating $h$ in the original coorindates at the value $\tilde \theta^{\rm a, e}$. Additionally, as evident from \eqref{eqn:eqm_eta_h}, the parameter $\delta$ in the softened max operation, \eqref{eq-maxdelta}, creates a bias in the equilibrium that favors safety.

The proof of Theorem \ref{thm:convergence_avg} utilizes the results of Section \ref{sec:convergence}, and is shown in the appendix.

\section{Safety} \label{sec:safety_in_the_average}
This section provides a description of safety, through the use of singular perturbation results. The following results also motivates the addition the $\maxapprox$ safety term which differentiates ASfES from standard extremum seeking. In the previous section we showed that through a linearization of the average system, the dynamics of ASfES are locally stable in $n$ dimensions. We also showed that the average system and the original system are close. In the this section, we begin by showing that the parameter $\tilde \theta^{\rm a}(t)$ of the average system is close to a reduced order model, denoted by its parameter $\theta_{\rm r}(t)$ - this reduced model is created by taking the the filter gains $\omegaf \to \infty$. These results are used in the proof of Theorem \ref{thm:assignable_prac_safety} shown in the appendix. See Fig. \ref{fig:analysis_roadmap} for a visual depiction of the high level steps in the analysis.
  
Consider the singular perturbation as $\omegaf$ approaches $\infty$. We can express \eqref{eqn:avg_error_c_dynamics} in the form of the standard singular perturbation model with $\epsilon = \frac{1}{\omegaf}$ in the original time $t=\omega/\tau$ with the original ordering:
\begin{align}
    \dot{\tilde{\theta}}^{\rm a}_{\rm c} &= w(z), \label{eqn:sing_pert1} \\ 
    \epsilon \dot{z} &= p(\tthac,z), \label{eqn:sing_pert2}
\end{align}
where $z = [\gjac;\ejac;\ghac;\ehac;\gac]$ and 
\begin{multline}
w(z) = -k (\gjac+\gjae) + (\gac+\gae) (\ghac+\ghae) \maxapprox\{ \beta \},
\end{multline}
\begin{equation}
p(\tilde{\theta}^{\rm a},z) =
\begin{bmatrix*}[l]
&  -\omegaf (\gjac + \gjae) + \omegaf H (\tthac + \tthae)  \\
&  -\omegaf (\ejac + \ejae) + \omegaf (J(\tthac + \tthae + \theta^*) + \\ & \qquad\frac{a^2}{4} \text{Tr}(H)) \\
&  -\omegaf (\ghac + \ghae) + \omegaf h_1  \\
&  -\omegaf (\ehac + \ehae) + \omegaf h_0 + \omega_f h_1 (\tthac + \tthae + \theta^*) \\
&  \omegaf  (\gac + \gae) (1 - (\gac + \gae) ||\ghac + \ghae||^2)
\end{bmatrix*},
\end{equation}
with the quantity 
\begin{equation}
    \beta = k (\gjac + \gjae)^T (\ghac + \ghae) - c (\ehac + \ehae)
\end{equation}
In an effort to meet the conditions of Theorem 11.2 `Singular Perturbation on the Infinite Interval' in \cite{khalil}, we must show that the reduced model and the boundary layer model are exponentially stable.
Let us denote the root of $0 = p(\tthac,z)$ as the quasi-steady state 
\begin{equation}\label{eq-qss}
z = 
\begin{bmatrix*}[c]
& \gjac \\
& \ejac \\
& \ghac \\
& \ehac \\
& \gac
\end{bmatrix*}
= r(\tthac) = 
\begin{bmatrix*}[l]
&  H \tthac \\
&  \frac{1}{2} \tthacT H \tthac\\
&  0 \\
&  h_1^T \tthac \\
&  0
\end{bmatrix*}.
\end{equation}
The reduced model is $n$-dimensional differential equation
\begin{equation} \label{eqn:reduced}
    \dot{\theta}_{\rm r} = w(r(\theta_{\rm r})).
\end{equation}
To show stability of the reduced order model we must determine if the Jacobian 
\begin{equation} \label{eqn:reduced_lin}
J_r = \left. \frac{\partial w(r(\theta_{\rm r}))}{\partial \theta_{\rm r}}
 \right|_{\theta_{\rm r}=0} 
\end{equation}
is Hurwtiz. We are able to use the calculations in Section \ref{sec:linearization} to help us differentiate, using the terms 
\begin{align}
    \left. \dfrac{\partial g_1(x^{\rm a}_c)}{\partial \gjac} \right|_{x^{\rm a}_c=0} &= -M \label{eqn:first_partial}\\
    \left. \dfrac{\partial g_1(x^{\rm a}_c)}{\partial \ehac} \right|_{x^{\rm a}_c=0} &= -c \alpha ||h_1||^{-2} h_1 \label{eqn:second_partial}
\end{align}
from \eqref{J11_}. Equation \eqref{eqn:first_partial} is simply $\frac{\partial w(z)}{\partial \gjac}$ and \eqref{eqn:second_partial} is $\frac{\partial w(z)}{\partial \ehac}$, both evaluated at $z=0$. Now consider the following boundary layer variable as functions of $\theta_{\rm r}$ as follows: $\gjac = \gjac(\theta_{\rm r}) =  H \theta_{\rm r}$ and $\ehac = \ehac(\theta_{\rm r}) = h_1^T \theta_{\rm r}$. Therefore we can write the RHS of \eqref{eqn:reduced} as
\begin{align}
    w(r(\theta_{\rm r})) &=  -k (\gjac(\theta_{\rm r})+\gjae) + (\gac+\gae) (\ghac+\ghae) \nonumber \\&\qquad \maxapprox\{ \beta \}, \\
    \beta &= k (\gjac(\theta_{\rm r}) + \gjae)^T (\ghac + \ghae) - \nonumber \\&\qquad c (\ehac(\theta_{\rm r}) + \ehae)
\end{align}
where we have yet to use the relations in \eqref{eq-qss}. Computing the Jacobian, and linearizing \eqref{eqn:reduced}, we can write \eqref{eqn:reduced_lin} as
\begin{align}
J_r&=\left.\left(\frac{\partial w(r(\theta_{\rm r}))}{\partial \gjac} \frac{\partial \gjac}{\partial \theta_{\rm r}} + \frac{\partial w(r(\theta_{\rm r}))}{\ehac} \frac{\ehac}{\partial \theta_{\rm r}} \right) \right|_{\theta_{\rm r}=0} \\
&= \left.\frac{\partial w(r(\theta_{\rm r}))}{\partial \gjac}\right|_{\theta_{\rm r}=0} \frac{\partial \gjac}{\partial \theta_{\rm r}} + \left.\frac{\partial w(r(\theta_{\rm r}))}{\ehac} \right|_{\theta_{\rm r}=0} \frac{\ehac}{\partial \theta_{\rm r}}  \\
&= -M H - c \alpha \frac{h_1 h_1^T}{||h_1||^2}
\end{align}
By Lemma \ref{lem:eig_real_pos} we conclude the eigenvalues of $J_r$ are real and negative and the reduced model is exponentially stable at the origin.
 
The boundary layer model is
\begin{equation}
    \dot{z}_{\rm b} = p(\theta, z_{\rm b} + r(\theta))
\end{equation}
for a fixed $\theta$. The boundary layer model can be shown to be 
\begin{equation} \label{eqn:boundary_layer}
    \dot{z}_{\rm b} = \begin{bmatrix}
    -z_{\rm b,1} \\
    -z_{\rm b,2} \\
    -z_{\rm b,3} \\
    -z_{\rm b,4} \\
    (z_{\rm b,5} + \frac{1}{||h_1||^2})(1-(z_{\rm b,5} + \frac{1}{||h_1||^2})||z_{\rm b,3} + h_1||^2)
    \end{bmatrix}.
\end{equation}
with a slight notation abuse denoting $z_{\rm b,i}$ as either a vector or scalar variable corresponding with the vectors and scalars $\gjac, \ejac ,$ ... $,\gac$. It is evident by inspection that the linearization of \eqref{eqn:boundary_layer} at the origin has $2n+3$ eigenvalues at $-1$. (Note that the linearization of the last row will yield a cross term due to $z_{\rm b,3}$, but since the linearization is triangular we need only consider the diagonal entrees.) Hence, the boundary layer model is exponentially stable. 

The reduced order model in \eqref{eqn:reduced2} 
\begin{align} \label{eqn:reduced2}
    \dot{\theta}_{\rm r} &= -k(H \theta_{\rm r} + \gjae) + \frac{1}{||h_1||^2} \maxapprox\{k (H \theta_{\rm r} + \gjae)^T h_1  \nonumber \\ &\qquad - c(h_1^T\theta_{\rm r} + \ehae)\} ,
\end{align}
can be expressed in the coordinates $\theta_{\rm r} = \tilde {\theta}_{\rm r} - \tthae$, as
\begin{equation} \label{eqn:reduced3}
    \dot{\tilde \theta}_{\rm r} = -k H \tilde {\theta}_{\rm r} + \frac{h_1}{||h_1||^2} \maxapprox\{k\tilde {\theta}_{\rm r}^T H  h_1 - c(h_0 + h_1^T\tilde {\theta}_{\rm r}) \} ,
\end{equation} 
using the relations $\ehae=h_0 + h_1^T H^{-1} \gjaeT$ and $\tthae = H^{-1} \gjae$. We can now state the following result having satisfied the conditions in \cite{khalil}. 

\begin{proposition} [Singular Perturbation of Average System] \label{prop:sing_pert}
Let the solution of \eqref{eqn:avg_sys} be given by $x^{\rm a}(t)$, with its first component ${\tilde \theta}^{\rm a}(t)$ in the time scale $t$, the solution of reduced model \eqref{eqn:reduced3} be given by $\tilde {\theta}_{\rm r}(t)$, and suppose Assumptions \ref{assum:1}--\ref{assum:3} hold. Then there exist positive constants $\mu_1, \mu_2$ and $\omegaf^*$ such that for all 
\begin{equation}
|| {\tilde \theta}^{\rm a}_{\rm c}(0) || < \mu_1, \ \ || z(0) - r({\tilde \theta}^{\rm a}_{\rm c}(0)) || <\mu_2,
\ \ \omegaf > \omegaf^*
\end{equation}
the singular perturbation problem \eqref{eqn:sing_pert1}--\eqref{eqn:sing_pert2} has a unique solution for all $t>0$ and
\begin{equation}\label{eq-SPtheta}
||{\tilde \theta}^{\rm a}(t) - \tilde {\theta}_{\rm r}(t)|| = O (1/\omegaf) \quad \text{for all }t \in [0, \infty).
\end{equation}
\end{proposition}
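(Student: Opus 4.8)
The plan is to verify, one by one, the hypotheses of Theorem~11.2 (``Singular Perturbation on the Infinite Interval'') in \cite{khalil} and then quote its conclusion. The model \eqref{eqn:sing_pert1}--\eqref{eqn:sing_pert2} is already in standard singularly perturbed form with small parameter $\epsilon = 1/\omegaf$, slow state $\tthac$, and fast state $z$. First I would record the regularity conditions: after expanding $\maxapprox\{\cdot\}$ through its closed form \eqref{eq-maxdelta}, which is $C^\infty$ precisely because $\delta>0$, the maps $w$ and $p$ are smooth in $(\tthac, z)$; the quasi-steady-state map $z = r(\tthac)$ in \eqref{eq-qss} is an explicit, smooth, isolated root of $0 = p(\tthac, z)$; and $w(r(0)) = 0$, so the origin is an equilibrium of the reduced model --- this last fact is inherited from the construction of $x^{\rm a, e}$ and is equivalent to \eqref{eqn:eqm1}. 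The only delicate point is that the Riccati equation for $\gamma$ has a second, spurious root at $\gac + \gae = 0$; the branch we use, $\gac + \gae = \|h_1\|^{-2}>0$, is isolated from it, and staying on the correct side is what forces the radii $\mu_1, \mu_2$ in the statement to be finite.

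Second, I would establish exponential stability of the origin of the reduced model \eqref{eqn:reduced3}. Its linearization $J_r$ was computed in \eqref{eqn:reduced_lin} to be $-MH - c\alpha\, h_1 h_1^T/\|h_1\|^2$, and by Lemma~\ref{lem:eig_real_pos} all its eigenvalues are real and strictly negative; Lyapunov's indirect method then gives local exponential stability, hence a converse Lyapunov function with the usual quadratic sandwich bounds on a ball around the origin. Third, for the boundary layer model \eqref{eqn:boundary_layer}, the key observation is that, written in the $z_{\rm b}$ coordinates, its right-hand side does not depend on the frozen slow variable at all; its linearization at the origin is triangular (the only off-diagonal contribution being the $z_{\rm b,3}$ cross term in the $\gamma$ row) with all $2n+3$ diagonal entries equal to $-1$, so the origin is exponentially stable \emph{uniformly} in the frozen slow variable --- trivially so, since there is no slow-variable dependence to be uniform over --- and again admits a converse Lyapunov function with quadratic bounds.

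With the smoothness and isolated-root conditions, exponential stability of the reduced model, and uniform exponential stability of the boundary layer all in hand, the hypotheses of Theorem~11.2 of \cite{khalil} hold on a compact neighborhood of the origin. The theorem then yields, for $\omegaf$ exceeding some $\omegaf^*$ and for initial data with $\|\tthac(0)\|<\mu_1$ and $\|z(0) - r(\tthac(0))\|<\mu_2$, that the solution exists and is unique on $[0,\infty)$ (the composite Lyapunov estimate confines the trajectory to a forward-invariant neighborhood, ruling out finite escape and keeping $\gamma$ on the correct root branch), and that the slow variable satisfies $\|\tthac(t) - (\tilde\theta_{\rm r}(t) - \tthae)\| = O(1/\omegaf)$ uniformly in $t$, where $\tilde\theta_{\rm r} - \tthae$ is the reduced trajectory \eqref{eqn:reduced3} written in the centered coordinate. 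Since $\tilde\theta^{\rm a} = \tthac + \tthae$ and both curves are shifted by the same constant $\tthae$, this is exactly \eqref{eq-SPtheta}.

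I expect the main obstacle to be not the eigenvalue computations --- those are already done in Section~\ref{sec:linearization} and Lemma~\ref{lem:eig_real_pos} --- but the bookkeeping needed to close the argument on the \emph{infinite} interval: verifying the Lipschitz and growth conditions on the interconnection terms (the coupling $\partial w/\partial z$ and the slow-variable dependence of $p$) that enter the composite-Lyapunov estimate of \cite{khalil}, and, most of all, exhibiting a forward-invariant neighborhood of the origin on which the Riccati state $\gamma$ cannot reach the spurious root $\gac+\gae = 0$. Choosing $\mu_1, \mu_2$ small enough, and independent of $\omegaf$, handles this, but making that choice explicit rather than merely asserting its existence is the part that requires care.
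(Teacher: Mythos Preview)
Your proposal is correct and follows essentially the same route as the paper: the paper's ``proof'' of this proposition is precisely the preceding discussion in Section~\ref{sec:safety_in_the_average}, which puts the average system in standard singularly perturbed form, computes the quasi-steady state $r(\tthac)$, shows the reduced model is locally exponentially stable via $J_r = -MH - c\alpha h_1 h_1^T/\|h_1\|^2$ and Lemma~\ref{lem:eig_real_pos}, shows the boundary layer model has $2n+3$ eigenvalues at $-1$, and then invokes Theorem~11.2 of \cite{khalil}. Your additional attention to the spurious Riccati root and the bookkeeping for the infinite-interval closure is more explicit than what the paper records, but the underlying argument is the same.
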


We omit other bounds given by \cite{khalil} (those based on the quasi-steady state \eqref{eq-qss}) as they do not relate to safety. Proposition \ref{prop:closeness_avg} describes closeness of the original system with that of the average system. Proposition \ref{prop:sing_pert} describes a closeness in the average system and the reduced model \textit{of the average system} \eqref{eqn:reduced2} (with $\omegaf$ large). Now we show the reduced order model \eqref{eqn:reduced3} is indeed safe.

\begin{proposition}[Safety of the Reduced System] \label{prop:safety_of_reduced}
Under the dynamics of the system \eqref{eqn:reduced3}, the set $\{ \tilde \theta_{\rm r} \in \mathbb{R}: h(\tilde \theta_{\rm r} + \theta^*) \geq 0 \}$ is forward invariant and $h(\tilde \theta_{\rm r}(t) + \theta^*) \geq h(\tilde \theta_{\rm r}(0) + \theta^*) e^{-ct}$ for all $t \geq 0$.
\end{proposition}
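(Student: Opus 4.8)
The plan is to exhibit the evolution of $h$ along trajectories of \eqref{eqn:reduced3} and show it satisfies a differential inequality $\dot\sigma \ge -c\sigma$, where $\sigma(t) := h(\tilde\theta_{\rm r}(t) + \theta^*) = h_0 + h_1^T\tilde\theta_{\rm r}(t)$; forward invariance and the exponential lower bound then follow by the comparison lemma. First I would compute $\dot\sigma = h_1^T \dot{\tilde\theta}_{\rm r}$ by left-multiplying \eqref{eqn:reduced3} by $h_1^T$:
\begin{equation}
\dot\sigma = -k\, h_1^T H \tilde\theta_{\rm r} + \maxapprox\{k\,\tilde\theta_{\rm r}^T H h_1 - c\sigma\}.
\end{equation}
Writing $\phi := k\, h_1^T H \tilde\theta_{\rm r} = k\,\tilde\theta_{\rm r}^T H h_1$ (a scalar, since $H$ is symmetric), this is exactly $\dot\sigma = -\phi + \maxapprox\{\phi - c\sigma\}$.

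The key step is the elementary inequality $\maxapprox\{x\} \ge \max\{x,0\} \ge x$ for all $x\in\mathbb{R}$, which holds because $\sqrt{x^2+\delta} > \sqrt{x^2} = |x| \ge x$ (the paper already notes $\maxapprox\{x\} > \max\{x,0\}$). Applying this with $x = \phi - c\sigma$ gives $\maxapprox\{\phi - c\sigma\} \ge \phi - c\sigma$, hence
\begin{equation}
\dot\sigma \;=\; -\phi + \maxapprox\{\phi - c\sigma\} \;\ge\; -\phi + (\phi - c\sigma) \;=\; -c\sigma .
\end{equation}
This differential inequality is the crux. By the comparison lemma (Lemma 3.4 in \cite{khalil}), $\sigma(t) \ge \sigma(0)\,{\rm e}^{-ct}$ for all $t\ge 0$, which is precisely $h(\tilde\theta_{\rm r}(t)+\theta^*) \ge h(\tilde\theta_{\rm r}(0)+\theta^*)\,{\rm e}^{-ct}$. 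Forward invariance of $\{\tilde\theta_{\rm r}: h(\tilde\theta_{\rm r}+\theta^*)\ge 0\}$ is then immediate: if $\sigma(0)\ge 0$ then $\sigma(t) \ge \sigma(0){\rm e}^{-ct} \ge 0$ for all $t\ge 0$.

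I do not anticipate a serious obstacle here — the proof is short once the right Lyapunov-like scalar $\sigma$ is chosen. The only mild subtlety is making sure solutions of \eqref{eqn:reduced3} exist globally in forward time so the comparison argument applies on $[0,\infty)$; since the right-hand side of \eqref{eqn:reduced3} is smooth and, using the bound $\maxapprox\{x\} \le |x| + \sqrt\delta$, grows at most linearly in $\tilde\theta_{\rm r}$, no finite-time blow-up occurs and global existence holds. One should also note that the argument uses only $\maxapprox\{x\}\ge x$ and makes no use of $\maxapprox\{x\}\ge 0$, so the conservative over-approximation built into $\maxapprox$ (versus the true $\max\{x,0\}$) is exactly what is needed; this is the structural reason the $\maxapprox$ safety term is included, as the surrounding text anticipates.
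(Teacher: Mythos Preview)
Your proposal is correct and follows essentially the same approach as the paper: both compute $\dot h$ along \eqref{eqn:reduced3}, reduce to the scalar identity $\dot h + ch = -x + \maxapprox\{x\}$ with $x = k h_1^T H\tilde\theta_{\rm r} - c h$, and invoke $\maxapprox\{x\} > x$ to obtain $\dot h \ge -ch$. The paper frames this as the CBF condition $\dot h + ch > 0$ and cites \cite{ames2016control}, whereas you invoke the comparison lemma directly; the content is the same.
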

\begin{proof}
We can verify that \eqref{eqn:reduced3} renders the set forward invariant by showing $\dot{h}(\tilde \theta_{\rm r} + \theta^*) + c h(\tilde \theta_{\rm r} + \theta^*) \geq 0$ \cite{ames2016control}:
\begin{align}\label{eq-hdot}
    \dot{h} + c h &= - k h_1^T H \tilde \theta_{\rm r} + c h(\tilde \theta_{\rm r} + \theta^*)  + \nonumber \\& \qquad \maxapprox\{k h_1^T H \tilde \theta_{\rm r} - c  h(\tilde \theta_{\rm r} + \theta^*)\} > 0 
\end{align}
where we use the relation $h_0 + h_1^T \tilde \theta_{\rm r} = h(\tilde \theta_{\rm r} + \theta^*) $. The inequality holds because for any $x$, $-x + \maxapprox\{x\} > 0$.
\end{proof}
The safety of the reduced system, and its closeness to that of the average system provide the key intermediate results in achieving Theorem \ref{thm:assignable_prac_safety} --- the proof is shown in the appendix.

\section{Newton-Based Assignably Safe Extremum Seeking} \label{sec:NBASfES}
One might imagine that if we can assign a bound on the rate of $h$, while conducting gradient-based optimization, then we should be assign safety when performing Newton-based optimization \cite{ghaffari2012multivariable}, which has an assignable rate of convergence of the parameter itself. A Newton-Based Assignably Safe ES (NB-ASfES) scheme would conceivably achieve 1) assignable safety and 2) assignable convergence of the parameter. In NB-ASfES we hope for a convergence rate of the parameter to be assigned a rate $k$ when the assigned safety condition (with rate $c$) is not violated -- and if the safety condition is violated, the convergence of the parameter is such that the safety rate assignment is maintained. We show in this section that this is \emph{not possible} with the NB-ASfES approach in multiple dimensions, because the NB-ASfES scheme will not solve the optimization problem in \eqref{eqn:opt_problem_intro} in general. Furthermore, we also show that a NB-ASfES scheme is only useful for $n=1$.

To understand the basic dynamics of the ASfES scheme, we can think of the algorithm approximating the following dynamics:
\begin{equation} \label{eqn:exact_safe_scheme}
    \dot \theta = u_0 + \frac{\nabla h(\theta)}{|| \nabla h(\theta) ||^2} \max\{-u_0^T \nabla h(\theta) - c h(\theta), 0 \}  ,
\end{equation}
with $u_0 = -k \nabla J(\theta)$. The form of \eqref{eqn:exact_safe_scheme} guarantees safety and can be derived for a nominal control law $u_0$ using the standard QP formulation given in \cite{ames2016control}. Now consider a NB-ASfES controller, with the nominal control $u_0 = -k H^{-1} \nabla J(\theta) = -k(\theta - \theta^*)$ where $k>0$ is the assigned rate of convergence. Note that in the extremum seeking form of \eqref{eqn:exact_safe_scheme}, the NB-ASfES algorithm contains a state $\Gamma(t)$ which provides the estimate of $H^{-1}$ \cite{ghaffari2012multivariable}. 

Consider the case of $h_0 < 0$ where the minimizer of $J$ on $\{h(\theta) \geq 0 \}$ is unsafe and given by
\begin{equation}
    \theta_{\textup{smin}} = \frac{|h_0| H^{-1} h_1}{h_1^T H^{-1} h_1} + \theta^*.
\end{equation}
See the proof of Proposition \ref{prop:const_min} in the Appendix for this fact. Now let us ask the question: is the equilibrium of \eqref{eqn:exact_safe_scheme} equal to $\theta_{\textup{smin}}$ when $u_0=-k(\theta - \theta^*)$ and $h_0 < 0$ under Assumptions \ref{assum:1} - \ref{assum:3}? Solving for 
\begin{multline}
        -k(\theta_{\textup{smin}} - \theta^*) +\frac{\nabla h(\theta_{\textup{smin}})}{|| \nabla h(\theta_{\textup{smin}} ||^2} \times \\ \max\{k(\theta_{\textup{smin}} - \theta^*)^T \nabla h(\theta_{\textup{smin}}) - c h(\theta_{\textup{smin}}), 0 \} = 0
\end{multline}
yields the condition 
\begin{equation}
    H^{-1} h_1 = \frac{h_1 H^{-1} h_1}{|| h_1 ||^2} h_1. \label{eqn:NB-ASfES_condition}
\end{equation}
This is nothing more than an eigenvector equation stating that $h_1$ must be an eigenvector of $H^{-1}$. Because $H$ and $H^{-1}$ share eigenvectors, $h_1$ must also be an eigenvector of $H$. This is a restrictive condition, stating that the gradient of $h$, must point along one of the principle axes of the ellipsoids formed by the levels of $J$.

One can also come to the conclusion that NB-ASfES will not solve the constrained optimization problem with some geometric intuition for the case $n=2$. Intuition says that ASfES, with the nominal control law of $u_0=-k \nabla J$, causes trajectories to descend down the level curves of $J$, finding the smallest level curve of $J$ which intersects the safe set at some point $p = \theta_{\textup{smin}}$. The level curve $J(\theta) = J(\theta_{\textup{smin}})$ will in general be an ellipse in $2$ dimensions. If the nominal control law $u_0=-k(\theta - \theta^*)$ is used instead, trajectories will now travel down level curves of some other function $\tilde J(\theta) = \frac{1}{2}(\theta - \theta^*)^T (\theta - \theta^*)$ because $(\theta - \theta^*)$ is the gradient of $\tilde J$. The smallest level set of $\tilde J$ touching the safe set is, in general, a circle and will intersect at some other point $\tilde p$, different from the point $p$ given by the ellipse generated from $J$.

We cannot assume that the restrictive condition \eqref{eqn:NB-ASfES_condition} holds when $h_1$ and $H$ are unknown to the user, and therefore we do not present a NB-ASfES scheme for the generic $n$ dimensional case. The only situation where \eqref{eqn:NB-ASfES_condition} holds is in the 1 dimensional case. 

Based on this discussion, we propose a NB-ASfES scheme only for the case of $n=1$, as
\begin{align}
    \dot{\hat \theta} &= -k \Gamma G_J + \gamma \maxapprox\{k \Gamma G_J G_h - c \eta_h\} G_h \label{eqn:th_dyn_NB}\\
    \dot G_J &= -\omegaf G_J + \omegaf(J( \hat \theta(t) + S(t) ) - \eta_J)M(t) \label{eqn:gj_dyn_NB}\\
    \dot \eta_J &= -\omegaf \eta_J + \omegaf J( \hat \theta(t) + S(t) ) \label{eqn:etaj_dyn_NB}\\
    \dot G_h &= -\omegaf G_h + \omegaf(h( \hat \theta(t) + S(t) ) - \eta_h)M(t) \label{eqn:gh_dyn_NB}\\
    \dot \eta_h &= -\omegaf \eta_h + \omegaf h( \hat \theta(t) + S(t) ) \label{eqn:etah_dyn_NB} \\
    \dot \gamma &= \omegaf  \gamma (1 - \gamma |G_h|^2) \label{eqn:gamma_dyn_NB} \\
    \dot \Gamma &= \omegaf  \Gamma (1 - \Gamma J( \hat \theta(t) + S(t) )N(t)) \label{eqn:Gamma_dyn_NB}
\end{align}
where $N(t) = \frac{16}{a^2}(\sin^2(\omega t) -\frac{1}{2})$. 

The resulting theory is identical to that of Theorem \ref{thm:convergence_avg} and \ref{thm:assignable_prac_safety} for the scalar algorithm given in \eqref{eqn:th_dyn_NB} - \eqref{eqn:Gamma_dyn_NB}, and follow similar analysis steps (but for $n=1$) shown in Section \ref{sec:convergence} and Section \ref{sec:safety_in_the_average}. The scalar NB-ASfES algorithm shares the assignable safety property with that of the gradient-based algorithm ASfES, with the additional feature that the nominal convergence rate of the parameter can also be assigned.

In Section \ref{sec:scalar} we illustrate the behavior of the NB-ASfES algorithm. 
\begin{figure*}[t]
  \centering
  \includegraphics[scale=0.75]{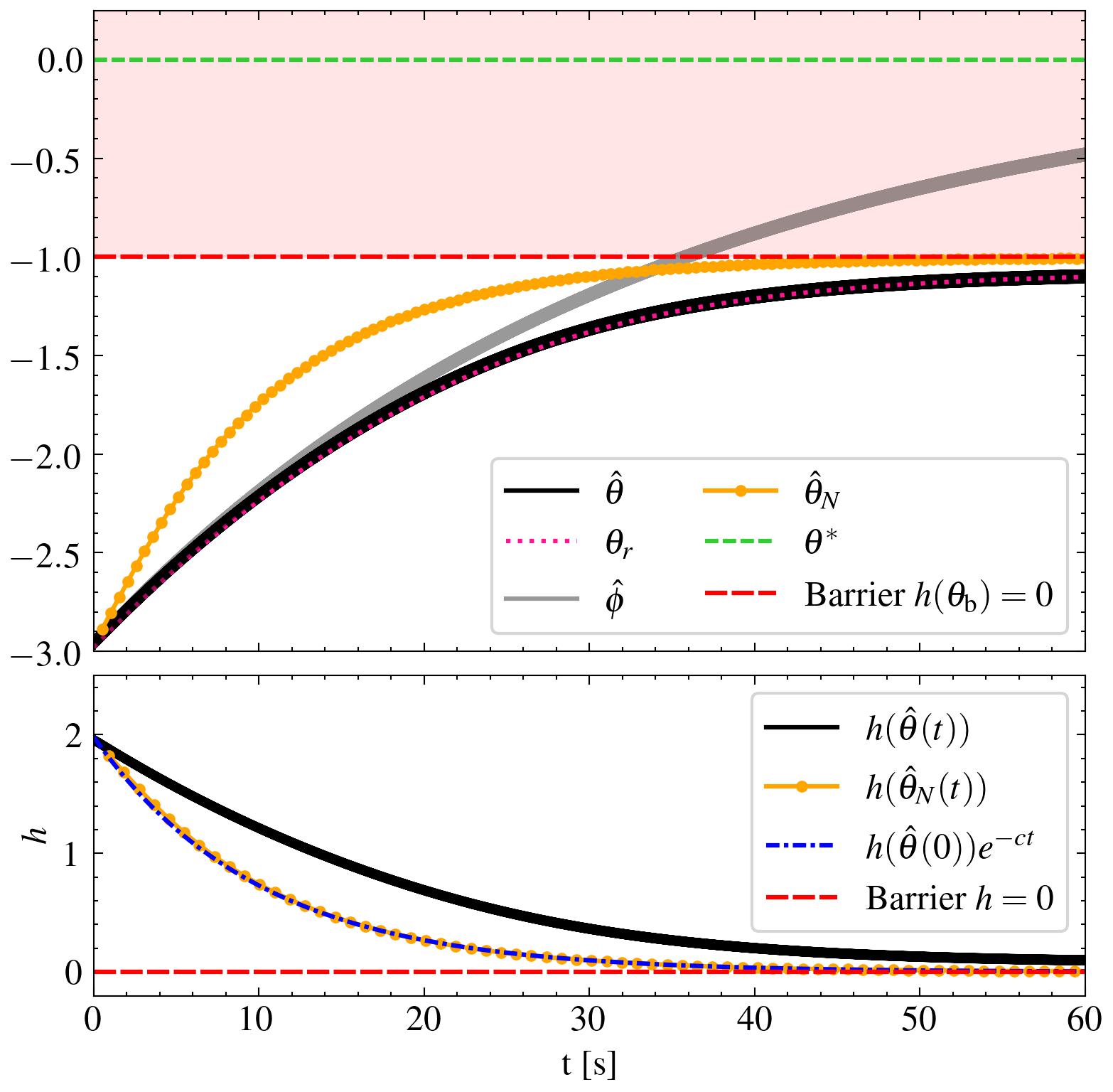}
  \caption{A demonstration of the algorithm in 1D with the optimizer lying in the unsafe region --- trajectories of the original and reduced system of ASfES are given by  $\hat \theta$ and $\theta_{r}$ and the NB-ASfES trajectory is given by $\hat \theta_{N}$. The classical ES solution is given by $\hat \phi$, and the red shaded region marks the unsafe set where $h<0$.}
  \label{fig:scalar}
\end{figure*}

\section{Simulations} \label{sec:simulations}
\subsection{Example 1: Scalar System} \label{sec:scalar}
To demonstrate the closeness of trajectories provided by our analysis, we show the solutions of the original ASfES algorithm \eqref{eqn:th_dyn}--\eqref{eqn:gamma_dyn} and reduced system \eqref{eqn:reduced2}, under Assumptions \ref{assum:1}--\ref{assum:3} in Figure \ref{fig:scalar}. The unknown function parameters are $\theta^* = 0$ (so $\theta = \tilde \theta$), $J^* = 0$, $H = 0.1$, $h_0 = -1$ and $h_1 = -1$. The ASfES and NB-ASfES controller parameters are $a = 0.25$, $k = 0.3$, $c=0.1$, $\delta = 10^{-3}$, $\omega= 200$, and $\omegaf = 3$. The initial conditions are $\hat \theta(0)=\hat \theta_{N}(0)=\theta_{r}(0) = -3$, while all other estimator states are initialized to their exact quantities. The trajectories of the ASfES and NB-ASfES algorithm parameters are denoted by $\hat \theta(t)$ and $\hat \theta_{N}(t)$ respectively. Note that although we do not know the gradients of $J$ and $h$ to initialize $G_J$ and $G_h$ to exactly $\nabla J(\theta(0))$ and $\nabla h(\theta(0))$, we can generate an accurate estimate by ``warming up'' the algorithm in \eqref{eqn:th_dyn}--\eqref{eqn:gamma_dyn} by setting $\dot{\hat{\theta}}=0$, and integrating over time until the states $G_{J}, \eta_J, G_h, \eta_h \gamma$ (and $\Gamma$ for NB-ASfES) converge adequately.

We compare the ASfES and NB-ASfES algorithms with the classical ES algorithm denoted by solution $\phi(t)$ which obey the dynamics
\begin{align*}
    \dot{\hat \phi} &= -k G_J \\
    \dot G_J &= -\omegaf G_J + \omegaf(J( \hat \phi(t) + S(t) ) - \eta_J)M(t)  \\
    \dot \eta_J &= -\omegaf \eta_J + \omegaf J( \hat \phi(t) + S(t) ) 
\end{align*}
with $\phi = \hat \phi + S(t)$. The initial condition is $\phi(0) = -3$ and other filtered estimates are initialized to their exact quantities. The gains $a, k , \omegaf, \omega$ are set the same as in the ASfES and NB-ASfES schemes.

The reduced system in the upper part of Fig. \ref{fig:scalar} maintains safety while the ASfES and NB-ASfES schemes maintain practical safety. In the lower part of Fig. \ref{fig:scalar} we observe the behavior of $h(\hat \theta(t))$ and $h(\hat \theta_{N}(t))$ have the assigned time constant $c$ bound associated with its approach to the barrier. The NB-ASfES exhibits faster convergence due to the cancellation of the small Hessian, $H=0.1$, yet maintains the assigned practical safety bound, which can be observed by the closeness between the plots of $h(\hat \theta_{N}(t))$ and $h(\hat \theta(0)) e^{-ct}$. 

\subsection{Example 2: Quadratic $J$ and Linear $h$ in 2 Dimensions}
\begin{figure*}[t]
  \centering
  \includegraphics[scale=0.75]{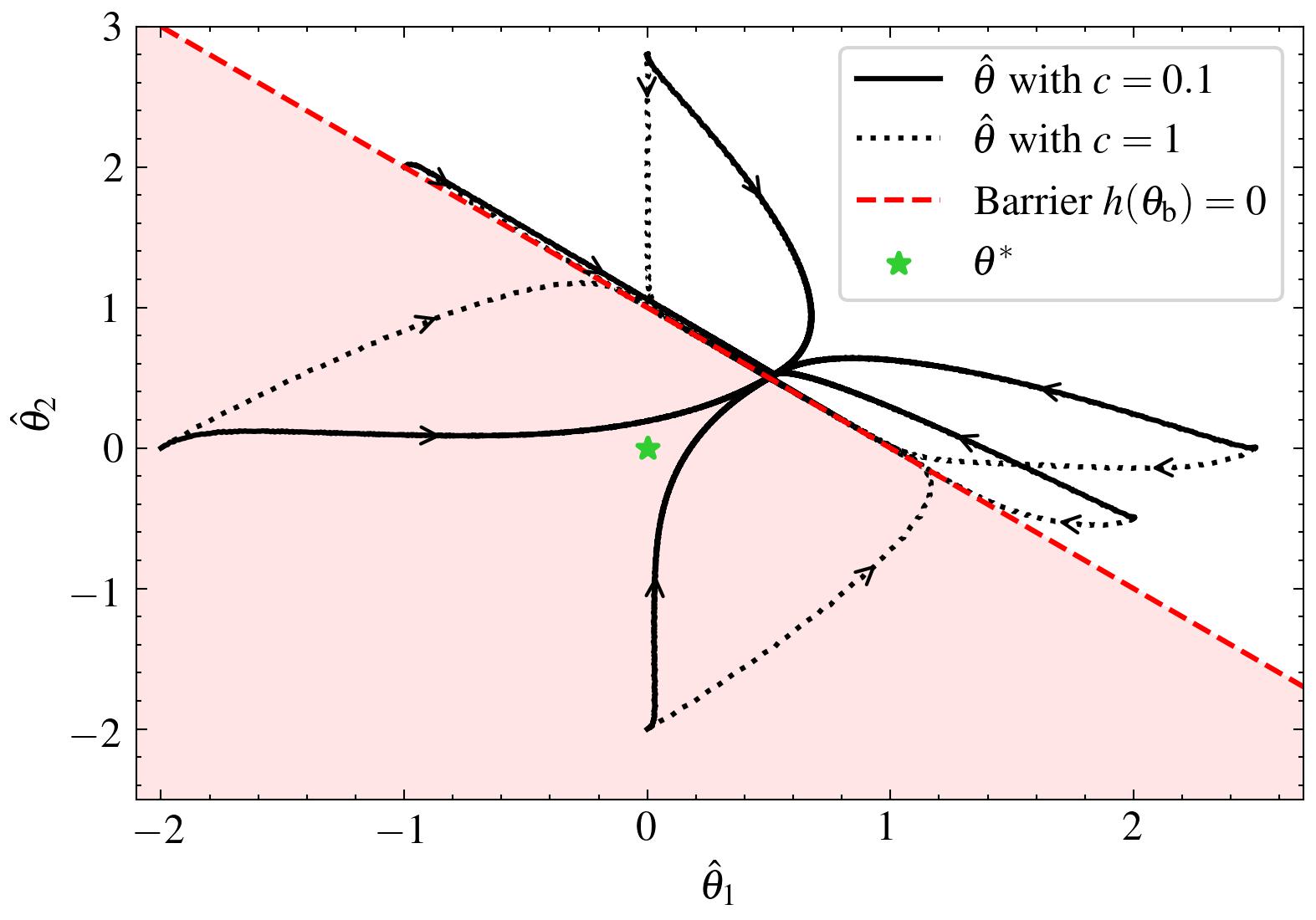}
  \caption{Trajectories of $\hat \theta$ in 2D with a linear $h$ and quadratic $J$. Six initial conditions, starting both inside and outside the safe set, and their trajectories are plotted for both $c=1$ and $c=0.1$. The global optimizer $\theta^*$ is outside of the safe set.}
  \label{fig:2d_linear_h}
\end{figure*}
In this final example we illustrate the effect of $c$ on ASfES in 2 dimensions. The unknown functions are $h = \theta_1 + \theta_2 - 1$ and $J = \theta_1^2 + \theta_2^2$. The controller parameters are $a = 0.25$, $k = 0.1$, $\delta = 10^{-3}$, $\omega_1 = 75$, $\omega_2 = 100$, and $\omegaf = 3$. Trajectories are plotted in Fig. \ref{fig:2d_linear_h} for various initial conditions for $c=1$ and $c=0.1$ with all other controller states are initialized to their exact quantities.

Notice that a larger value of $c$ allows a direct approach  (and a truly faster approach in time, see Theorem \ref{thm:assignable_prac_safety}) to the unsafe region when starting from the safe region but a more direct approach to the safe region when starting from the unsafe region.

\section{Conclusion}
We present ASfES and NB-ASfES which achieve an assignable safety attractivity rate, specifying a bound on the rate at which $h$, the measurement of safety, is allowed to decay --- with trajectories moving towards the safe set from the unsafe region, or moving towards the unsafe set from a safe region. We demonstrate that the NB-ASfES scheme is only useful in the case of a scalar parameter, and may lead to non-optimal convergence if used in multiple dimensions, yet features an assignable rate of parameter convergence which is favored only if it agrees with the safety attractivity rate. 

The algorithms presented here are local in nature, but provide benefits over their semiglobal versions, with assignable safety attractivity and assignable (nominal) parameter convergence.

\section*{Appendix}
This section includes the analysis of the matrix \eqref{J11_} and the proof of Lemma \ref{corol:hurwitz}. It also includes additional details on the spectrum of matrices of this form. The first result describes a general algebraic relationship between the spectra of two matrices, $X$ and $Z$. The second result show the eigenvalues of a particular $Z$ are real and positive. These two results and Proposition \ref{prop:const_min} are used in Theorem \ref{thm:convergence_avg}. We then present the proof of Theorem \ref{thm:assignable_prac_safety}. We use the notation of a unit vector $\hat{h}_1 = h_1  || h_1||^{-1}$.
\subsection{Eigenvalues of $X$ and $Z$}
\begin{proposition} \label{prop:eig_relation}
Consider the $\mathbb{R}^{2n+1 \times 2n+1}$ matrix
\begin{equation*}
X = \begin{bmatrix}
    0 && -M && - \tilde{c}h_1  \\
    \omegaf H && - \omegaf I  && 0 \\
     \omegaf h_1^T && 0 && -\omegaf
    \end{bmatrix}
\end{equation*}
with $\omegaf, \tilde{c} > 0$, $h_1 \in \mathbb{R}^n$ and matrices $M, H \in \mathbb{R}^{n \times n}$.

Then, for all $\lambda \in \sigma(X)$ such that $\lambda \neq -\omegaf$, there exists a $\bar{\lambda} \in \sigma(\omegaf M H +\omegaf \tilde{c}h_1 h_1 ^T)$ satisfying $\lambda^2 + \omegaf \lambda + \bar \lambda =0$.
\end{proposition}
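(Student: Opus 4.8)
The plan is to reduce the eigenvalue problem for the $(2n+1)\times(2n+1)$ matrix $X$ to an eigenvalue problem for the $n\times n$ matrix $\omegaf M H + \omegaf \tilde c\, h_1 h_1^T$ by exploiting the block structure. First I would write the eigenvector condition $X v = \lambda v$ with $v = [v_1; v_2; v_3]$ where $v_1, v_2 \in \mathbb{R}^n$ and $v_3 \in \mathbb{R}$, yielding the three equations $-M v_2 - \tilde c h_1 v_3 = \lambda v_1$, $\omegaf H v_1 - \omegaf v_2 = \lambda v_2$, and $\omegaf h_1^T v_1 - \omegaf v_3 = \lambda v_3$. From the second and third equations, provided $\lambda \neq -\omegaf$, I can solve $v_2 = \frac{\omegaf}{\lambda + \omegaf} H v_1$ and $v_3 = \frac{\omegaf}{\lambda + \omegaf} h_1^T v_1$.

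Next I would substitute these expressions back into the first equation, which gives $-\frac{\omegaf}{\lambda+\omegaf} M H v_1 - \frac{\omegaf}{\lambda+\omegaf}\tilde c\, h_1 h_1^T v_1 = \lambda v_1$, i.e. $\left(\omegaf M H + \omegaf \tilde c\, h_1 h_1^T\right) v_1 = -\lambda(\lambda+\omegaf) v_1$. This exhibits $v_1$ as an eigenvector of $\omegaf M H + \omegaf \tilde c\, h_1 h_1^T$ with eigenvalue $\bar\lambda := -\lambda(\lambda+\omegaf) = -\lambda^2 - \omegaf\lambda$, which rearranges to exactly $\lambda^2 + \omegaf\lambda + \bar\lambda = 0$, the claimed relation. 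The only loose end is to check that $v_1 \neq 0$: if $v_1 = 0$ then $v_2$ and $v_3$ are forced to zero as well by the solved expressions, contradicting $v \neq 0$, so $v_1$ is indeed a genuine eigenvector and $\bar\lambda \in \sigma(\omegaf M H + \omegaf \tilde c\, h_1 h_1^T)$.

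The one subtlety worth care is the handling of the excluded case $\lambda = -\omegaf$ — this is precisely why the statement carves it out, since at $\lambda = -\omegaf$ the factor $\lambda + \omegaf$ vanishes and the reduction above divides by zero. Since the proposition only asserts the relation for $\lambda \neq -\omegaf$, no further work is needed there; one simply notes that the derivation is valid exactly on the complement of $\{-\omegaf\}$. I expect the main (minor) obstacle to be purely bookkeeping: making sure the substitution is carried out cleanly and that the nonvanishing of $v_1$ argument is stated, rather than any deep difficulty. Note also that $M$ and $H$ here are generic matrices, so no positive-definiteness is used in this proposition — that will be invoked later (e.g. in Lemma \ref{lem:eig_real_pos}) when analyzing the spectrum of $\omegaf M H + \omegaf\tilde c\, h_1 h_1^T$ itself.
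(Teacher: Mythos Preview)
Your proposal is correct and follows essentially the same approach as the paper: write the eigenvector condition in block components, use the second and third block equations (together with $\lambda \neq -\omegaf$) to eliminate $v_2$ and $v_3$ in terms of $v_1$, substitute into the first block equation to obtain the eigenvalue relation for $\omegaf M H + \omegaf \tilde c\, h_1 h_1^T$, and argue $v_1 \neq 0$ by contradiction. The only cosmetic difference is that the paper multiplies the first block equation through by $(\lambda+\omegaf)$ before substituting, whereas you divide by it when solving for $v_2, v_3$; the content is identical.
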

\begin{proof}
Any eigenvalue $\lambda \neq -\omegaf$, of the matrix $X$, must satisfy the eigenvalue expression for a nonzero eigenvector,
\begin{equation}
    \begin{bmatrix}
    0 && -M && - \tilde{c}h_1  \\
    \omegaf H && - \omegaf I  && 0 \\
    \omegaf h_1^T && 0 && -\omegaf
    \end{bmatrix}
    \begin{pmatrix}
    u  \\
    v \\
    w
    \end{pmatrix}
    = \lambda
    \begin{pmatrix}
    u  \\
    v \\
    w
    \end{pmatrix} \label{eqn:eig_exp}
\end{equation}
with $u, v \in \mathbb{C}^n$, $w \in \mathbb{C}$. From the second and third rows of \eqref{eqn:eig_exp} we have
\begin{align}
    \omegaf H u &= (\lambda + \omegaf) v \label{eqn:eigr1}\\
    \omegaf h_1^T u &= (\lambda + \omegaf) w \label{eqn:eigr2}
\end{align}
and from the first row of \eqref{eqn:eig_exp} we have
\begin{align}
    -(\lambda + \omegaf) M v - (\lambda + \omegaf) \tilde{c} h_1 w &= (\lambda + \omegaf)\lambda u, \label{eqn:eigr3}
\end{align}
where we have multiplied on both sides $\lambda + \omegaf \neq 0$. Now we substitute expressions in \eqref{eqn:eigr3} for $(\lambda + \omegaf)w$ and $(\lambda + \omegaf) v$,
\begin{align}
    (\omegaf M H +\omegaf \tilde{c}h_1 h_1 ^T)u = -(\lambda^2 + \omegaf \lambda) u \label{eqn:eig_eqn_Z}.
\end{align}
What we arrive at, is another eigenvalue equation describing the scaling of the vector $u$ by the value $-(\lambda^2 + \omegaf \lambda)$.

It is also guaranteed that $u \neq 0$ for the eigenvector equation \eqref{eqn:eig_exp} under the following argument: suppose not and $u=0$, which implies in that $v=0$ and $w=0$ from \eqref{eqn:eigr1} and \eqref{eqn:eigr2} because $\lambda +\omegaf \neq 0$. Because the equation \eqref{eqn:eig_exp} must have nonzero eigenvector then we have a contraction which implies $u \neq 0$. 

Therefore we can say that $\bar{\lambda}$ is an eigenvalue of $Z$ defined as
\begin{equation}
    Z:=\omegaf M H +\omegaf \tilde{c}h_1 h_1 ^T,
\end{equation}
where
\begin{equation}
     \bar{\lambda} = - (\lambda^2 + \omegaf \lambda).
\end{equation} \label{eq:eigenval}
\end{proof}

\subsection{Eigenvalues of $Z$ are real and positive}
\begin{lemma} \label{lem:eig_real_pos}
Consider the matrix $Z:=\omegaf M H +\omegaf \tilde{c}h_1 h_1 ^T$ with the following parameters: $H \succ 0$, $M = k \left( I -  \alpha  \frac{h_1 h_1^T}{|| h_1 ||^2} \right)$, $k, \omegaf >0$, $\tilde{c} \geq 0$, $0 \leq \alpha \leq 1 $, and $h_1 \in \mathbb{R}^{n}_{\neq 0}$ such that if the scalar $\alpha=1$ then $\tilde{c} >0$ strictly.
Then, each $\bar{\lambda} \in \sigma(Z)$ is real and strictly positive.
\end{lemma}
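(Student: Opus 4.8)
The plan is to remove the harmless scalar factor $\omegaf>0$ and then reduce $Z$, by a similarity transformation, to a symmetric positive-definite matrix. Since $Z=\omegaf\bigl(MH+\tilde c\,h_1h_1^T\bigr)$ and $\omegaf>0$, it suffices to prove that $W:=MH+\tilde c\,h_1h_1^T$ has real, strictly positive spectrum. The crucial structural fact I would exploit is that $h_1$ is an eigenvector of $M$: from $M=k\bigl(I-\alpha\,h_1h_1^T/\|h_1\|^2\bigr)$ one gets $Mh_1=k(1-\alpha)h_1$, so $M$ acts as the scalar $k$ on $h_1^\perp$ and as $k(1-\alpha)$ on $\operatorname{span}(h_1)$. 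In particular $M\succeq0$ always, and $M\succ0$ exactly when $\alpha<1$.

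For the generic case $\alpha\in[0,1)$, $M\succ0$ and $M^{1/2}$ is well defined, so I would use the similarity $W\sim M^{-1/2}WM^{1/2}=M^{1/2}HM^{1/2}+\tilde c\,M^{-1/2}h_1h_1^TM^{1/2}$. Because $h_1$ is also an eigenvector of $M^{1/2}$ and of $M^{-1/2}$, with reciprocal eigenvalues $\bigl(k(1-\alpha)\bigr)^{\pm1/2}$, the rank-one term is unchanged, $M^{-1/2}h_1h_1^TM^{1/2}=h_1h_1^T$. Hence $W$ is similar to $M^{1/2}HM^{1/2}+\tilde c\,h_1h_1^T$, which is the sum of a symmetric positive-definite matrix (since $M\succ0$ and $H\succ0$) and a symmetric positive-semidefinite matrix, hence symmetric positive definite; its spectrum, and therefore $\sigma(W)$, is real and strictly positive, and multiplying by $\omegaf>0$ closes this case.

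For the boundary case $\alpha=1$ (where $\tilde c>0$ strictly by hypothesis), $M=kP$ with $P=I-h_1h_1^T/\|h_1\|^2$ the orthogonal projection onto $h_1^\perp$, and the $M^{1/2}$ trick is unavailable; instead I would use the block-triangular structure induced by $\mathbb{R}^n=\operatorname{span}(h_1)\oplus h_1^\perp$. Since $PHv\perp h_1$, for any eigenpair $(v,\bar\lambda)$ of $W$ the $\operatorname{span}(h_1)$-component of $Wv=\bar\lambda v$ reads $\tilde c\|h_1\|^2(\hat h_1^Tv)=\bar\lambda(\hat h_1^Tv)$ with $\hat h_1=h_1/\|h_1\|$; if $\hat h_1^Tv\neq0$ this forces $\bar\lambda=\tilde c\|h_1\|^2>0$, while if $\hat h_1^Tv=0$ then $v\in h_1^\perp$ and the $h_1^\perp$-component equation $kPHv=\bar\lambda v$, after writing $v=U\xi$ for an orthonormal basis matrix $U$ of $h_1^\perp$ and left-multiplying by $U^T$ (using $U^TP=U^T$), becomes $k\,(U^THU)\xi=\bar\lambda\xi$, so $\bar\lambda\in\sigma\!\bigl(k\,U^THU\bigr)\subset(0,\infty)$ because $U^THU\succ0$. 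In both subcases $\bar\lambda$ is real and positive.

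The only nontrivial point — the place where a naive argument fails — is that $W$ is not symmetric, so positivity is not immediate from $M\succeq0$ and $H\succ0$ alone; the whole argument hinges on recognizing that $h_1$ is an eigenvector of $M$, which is precisely what leaves the rank-one perturbation invariant under the symmetrizing similarity when $\alpha<1$, and what makes the block structure triangular when $\alpha=1$. I would also note in passing that the hypothesis ``$\tilde c>0$ strictly when $\alpha=1$'' is sharp: for $\alpha=1$ and $\tilde c=0$ one has $W=kPH$, which is singular.
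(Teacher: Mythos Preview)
Your proof is correct and follows the same overall strategy as the paper's --- conjugate by $M^{1/2}$ to land on a symmetric positive-definite matrix when $\alpha<1$, and exploit the block-triangular structure relative to $\operatorname{span}(h_1)\oplus h_1^\perp$ when $\alpha=1$ --- but your execution is tidier. The paper splits into three cases ($\alpha=0$, $\alpha\in(0,1)$, $\alpha=1$) and, in the middle case, explicitly computes $M^{-1}$ via the Sherman--Morrison formula in order to rewrite $\hat h_1\hat h_1^T$ as a linear combination of $M^{-1}$ and $I$ before applying the similarity; you instead observe directly that $h_1$ is an eigenvector of $M$ (hence of $M^{\pm1/2}$), so the rank-one term $h_1h_1^T$ is invariant under the conjugation, which merges the first two cases and bypasses the inverse computation entirely. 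For $\alpha=1$ the two arguments coincide in substance: the paper orthogonally diagonalizes $\hat h_1\hat h_1^T$ to exhibit a block-triangular matrix with diagonal blocks $\omegaf\tilde c\|h_1\|^2$ and $\omegaf k\,\tilde H_{22}$, while your eigenvector dichotomy is the same block decomposition read off without writing down the change of basis.
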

\begin{proof}
We consider three cases of $\alpha$ in this proof. Case 1 is when $\alpha=0$, Case 2 is $\alpha \in (0,1)$, and Case 3 is when $\alpha = 1$.

\textbf{Case 1:} If $\alpha = 0$, the matrix $Z$ can readily be shown to have real and strictly positive eigenvalues because it is the sum of a positive definite matrix and a positive semi definite matrix.

\textbf{Case 2:} Consider $\alpha \in (0,1)$. The positive definite matrix $M \succ 0$ is recalled below and its' inverse is also shown:
\begin{align}
    M &= k (I - \alpha \hat{h}_1 \hat{h}_1^T) \succ 0, \\
    M^{-1} &= k^{-1} \left( I + \frac{\alpha}{1-\alpha} \hat{h}_1 \hat{h}_1^T \right) \succ 0 \label{eqn:m_inv},
\end{align}
where $\hat{h}_1 = h_1  || h_1||^{-1}$. The reader can check by hand that $M M^{-1} = M^{-1} M = I$ holds, see \cite{miller1981inverse} for background. We use \eqref{eqn:m_inv} to derive the expression  
\begin{align}
     \hat{h}_1 \hat{h}_1^T &= \frac{(1-\alpha)}{\alpha} k M^{-1} - \frac{(1-\alpha)}{\alpha} I \quad \text{for } \alpha \neq 0 ,
\end{align}
and rewrite $Z$ as
\begin{align}
    Z &= \omegaf M H +\omegaf \tilde{c}h_1 h_1 ^T,  \\
    Z &= \omegaf M H + \beta \left( (1-\alpha)  k M^{-1} - (1-\alpha) I \right).
\end{align}
for some positive scalar $\beta = \omegaf \tilde{c} / \alpha \geq 0$. Now we perform a similarity transformation $Z' = T^{-1} Z T $. Let us take $T = M^{1/2} \succ 0$ and compute
\begin{align}
Z' = M^{-1/2} Z M^{1/2} = \omegaf M^{1/2} H M^{1/2}+ \beta \alpha \hat{h}_1 \hat{h}_1^T.
\end{align}
The matrix $Z'$ is the sum of a positive definite matrix $\omegaf M^{1/2} H M^{1/2}$ (which can be shown by definition) and a positive semi definite matrix $\beta \alpha \hat{h}_1 \hat{h}_1^T$. Because two similar matrices have the same eigenvalues, it follows that $Z$ shares its' eigenvalues with that of a symmetric, positive definite matrix $Z'$ which is known to have real and positive eigenvalues. 

\textbf{Case 3: } Let us consider the case of $\alpha = 1$, $\tilde{c} > 0$ and rewrite $Z$ as 
\begin{align}
    Z &= \omegaf k \left( I -  \frac{h_1 h_1^T}{|| h_1 ||^2} \right) H +\omegaf \tilde{c}h_1 h_1 ^T \\
    &= \omegaf k \left( I -  \hat{h}_1 \hat{h}_1 \right) H +\mu \hat{h}_1 \hat{h}_1 ^T
\end{align}
with $\mu = \omegaf \tilde{c}||h_1 ||^2 > 0$ .The symmetric, rank 1 matrix $\hat{h}_1 \hat{h}_1$, with spectrum $\sigma(\hat{h}_1 \hat{h}_1) = \{1,0,0,...,0 \}$, is diagonalizable by an orthogonal matrix $U$ such that $U^T = U^{-1}$. We can write
\begin{equation}
    \hat{h}_1 \hat{h}_1^T = U D U^T = U 
    \begin{bmatrix}
    1 & 0_{1 \times n-1} \\
    0_{n-1 \times 1} & 0_{n-1 \times n-1}
    \end{bmatrix} 
    U^T
\end{equation}
with $D$ having a single nonzero element $d_{11} = 1$. Then it follows that
\begin{align}
    Z &= \omegaf k (I - U D U^T)H + \mu U D U^T \nonumber\\
    &= U \left( \omegaf k (I - D )U^T H U + \mu D \right) U^T \nonumber\\
    &= U \left( \omegaf k (I - D )\tilde{H} + \mu D \right) U^T 
\end{align}
where $\tilde{H} \equiv U^T H U \succ 0$ is a matrix similar to $H$. Therefore $Z$ is similar to $Z' \equiv \omegaf k (I - D )\tilde{H} + \mu D$ which can be written as block triangular:
\begin{align}
Z' &= \omegaf k (I - D )\tilde{H} + \mu D \nonumber\\
    &=
    \omegaf k
    \begin{bmatrix}
    0 && 0   \\
    0 && I
    \end{bmatrix}
    \begin{bmatrix}
    \tilde{h}_{11} && \tilde{H}_{12}^T   \\
    \tilde{H}_{12} && \tilde{H}_{22}
    \end{bmatrix}
    +
    \mu
    \begin{bmatrix}
    1 && 0   \\
    0 && 0
    \end{bmatrix} \nonumber \\
    &=
    \begin{bmatrix}
    \mu && 0   \\
    \omegaf k \tilde{H}_{12} && \omegaf k \tilde{H}_{22}
    \end{bmatrix}.    \label{eqn:z_c3_1}
\end{align}
The matrix $\tilde{H}_{22}$ is a diagonal block of a positive definite matrix, and is positive definite itself \cite{horn2012matrix}. Because $Z'$ is block triangular with positive definite diagonal blocks, $Z$ has real and positive eigenvalues.
\end{proof}
\subsection{Properties of $X$}
In the next lemma, we extend the conclusions from the prior two results in this section and make more specific claims on the eigenvalues of the linearizing matrix. Statement \ref{stmt:spectrum} assigns each eigenvalue of $Z$ to a pair of eigenvalues of $X$. This is an extension of the results in Proposition \ref{prop:eig_relation}, which only concerns the existence of an eigenvalue of $Z$. 
\begin{lemma} \label{lem:all_properties_X}
Under Assumptions \ref{assum:1} - \ref{assum:3}, consider
\begin{equation}
X =
    \begin{bmatrix}
    0 && -M && - \tilde{c} h_1 \\
    \omegaf H && -\omegaf I  && 0 \\
    \omegaf h_1^T && 0 && -\omegaf
    \end{bmatrix}, \label{X}
\end{equation}
with
$0 \leq \alpha \leq 1 $, $ M = k \left( I -  \alpha \hat{h}_1 \hat{h}_1^T \right)$, and $\tilde{c}=c \alpha|| h_1 ||^{-2}$. Then
\begin{enumerate}
    \item X is Hurwtiz. \label{stmnt:hurwtiz}
    \item The spectrum $\sigma (X) = \{-\omegaf, \lambda_1^+, \lambda_1^-, ..., \lambda_n^+, \lambda_n^- \}$ where $\lambda_i^+ / \lambda_i^- \neq -\omegaf$ are the positive and negative solutions to $\lambda_i^2 + \omegaf \lambda_i + \bar{\lambda}_i=0$ for each $\bar{\lambda}_i \in \sigma (Z)$ and for all $\alpha \in [0,1]$, where $Z:=\omegaf M H +\omegaf \tilde{c}h_1 h_1 ^T$. \label{stmt:spectrum}
    \begin{enumerate}
        \item If $\alpha = 0$ then $\sigma (Z) = \sigma(\omegaf k H)$. \label{stmnt:alpha0}
        \item If $\alpha = 1$ then $\sigma (Z) = \{\omegaf c \} \cup \sigma(\omegaf k \Tilde{H}_{22})$ where $\tilde{H}_{22} \succ 0$ is the lower right $n-1 \times n-1$ block of $\tilde{H} = U^T H U $ for an orthogonal matrix $U$ which diagonalizes $\hat{h}_1 \hat{h}_1^T$.\label{stmnt:alpha1}
    \end{enumerate}
\end{enumerate}
\end{lemma}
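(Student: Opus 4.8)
The plan is to assemble the lemma from the two preceding results — Proposition \ref{prop:eig_relation} (the algebraic relation $\lambda^2 + \omegaf\lambda + \bar\lambda = 0$ linking $\sigma(X)$ to $\sigma(Z)$) and Lemma \ref{lem:eig_real_pos} (the eigenvalues of $Z$ are real and strictly positive under exactly the hypotheses $M = k(I-\alpha\hat h_1\hat h_1^T)$, $H\succ0$, $0\le\alpha\le1$, with $\tilde c>0$ when $\alpha=1$). First I would note that with $\tilde c = c\alpha\|h_1\|^{-2}$ the case $\alpha=1$ indeed forces $\tilde c>0$ (since $c>0$ by Assumption \ref{assum:3}), so Lemma \ref{lem:eig_real_pos} applies verbatim; hence every $\bar\lambda\in\sigma(Z)$ is real and positive. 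For statement \ref{stmnt:hurwtiz}, I would treat the eigenvalue $-\omegaf$ (which is always in $\sigma(X)$, coming from the block structure / the kernel of $\lambda+\omegaf$) separately, and for any other $\lambda\in\sigma(X)$ invoke Proposition \ref{prop:eig_relation}: $\lambda$ solves $\lambda^2+\omegaf\lambda+\bar\lambda=0$ for some real $\bar\lambda>0$. The roots are $\lambda = \tfrac{1}{2}(-\omegaf \pm \sqrt{\omegaf^2 - 4\bar\lambda})$; when the discriminant is nonnegative both roots are negative (their sum is $-\omegaf<0$, their product is $\bar\lambda>0$), and when it is negative the real part is $-\omegaf/2<0$. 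Either way $\operatorname{Re}\lambda<0$, so $X$ is Hurwitz.

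For statement \ref{stmt:spectrum} I would argue a dimension/counting bijection. The matrix $X$ is $(2n+1)\times(2n+1)$, $Z$ is $n\times n$. Proposition \ref{prop:eig_relation} shows each $\lambda\neq-\omegaf$ in $\sigma(X)$ maps (via its eigenvector's top block $u$, which is shown nonzero) to some $\bar\lambda\in\sigma(Z)$ with $\lambda^2+\omegaf\lambda+\bar\lambda=0$. Conversely, for each $\bar\lambda\in\sigma(Z)$ with eigenvector $u$, one can reconstruct $v = \omegaf Hu/(\lambda+\omegaf)$, $w = \omegaf h_1^Tu/(\lambda+\omegaf)$ to produce an eigenvector of $X$ for each of the two roots $\lambda_i^\pm$ of the quadratic — and these roots are genuinely distinct from $-\omegaf$ because plugging $\lambda=-\omegaf$ into $\lambda^2+\omegaf\lambda+\bar\lambda=0$ gives $\bar\lambda=0$, contradicting $\bar\lambda>0$. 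Counting with multiplicity: $n$ values $\bar\lambda_i$ each yield a pair $\{\lambda_i^+,\lambda_i^-\}$, giving $2n$ eigenvalues, plus the eigenvalue $-\omegaf$, totalling $2n+1$, which exhausts $\sigma(X)$. Since all $\bar\lambda_i>0$ are real, the quadratic's product-of-roots is positive and sum is $-\omegaf$, so within each pair the labels $\lambda_i^+$ (the root with larger, possibly complex, value) and $\lambda_i^-$ are well-defined as stated; and none equals $-\omegaf$.

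Sub-statements \ref{stmnt:alpha0} and \ref{stmnt:alpha1} are then direct specializations of the computation of $Z$ already carried out inside the proof of Lemma \ref{lem:eig_real_pos}. When $\alpha=0$, $M=kI$ and $\tilde c=0$, so $Z = \omegaf kH$ outright, giving $\sigma(Z)=\sigma(\omegaf kH)$. When $\alpha=1$, I would reuse the orthogonal diagonalization $\hat h_1\hat h_1^T = UDU^T$ with $D=\operatorname{diag}(1,0,\dots,0)$ from Case 3 of Lemma \ref{lem:eig_real_pos}: there it was shown $Z$ is similar to the block-triangular matrix with $(1,1)$-entry $\mu=\omegaf\tilde c\|h_1\|^2 = \omegaf c$ (using $\tilde c = c\|h_1\|^{-2}$ at $\alpha=1$) and lower-right block $\omegaf k\tilde H_{22}$, whence $\sigma(Z) = \{\omegaf c\}\cup\sigma(\omegaf k\tilde H_{22})$. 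The main obstacle is not any single hard estimate but the bookkeeping of the bijection in statement \ref{stmt:spectrum} — making sure multiplicities match and that no root collides with $-\omegaf$ — which the observation "$\lambda=-\omegaf \implies \bar\lambda=0$, impossible" resolves cleanly.
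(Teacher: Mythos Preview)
Your approach is essentially the same as the paper's: both derive statement~\ref{stmnt:hurwtiz} from Proposition~\ref{prop:eig_relation} and Lemma~\ref{lem:eig_real_pos}, reverse the eigenvector construction of Proposition~\ref{prop:eig_relation} to obtain the $2n$ eigenvalues $\lambda_i^\pm$ of $X$ from the $n$ eigenvalues of $Z$, and read statements~\ref{stmnt:alpha0}--\ref{stmnt:alpha1} directly off the case analysis in Lemma~\ref{lem:eig_real_pos}.

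The one place where you are looser than the paper is the claim that $-\omegaf \in \sigma(X)$ ``from the block structure / the kernel of $\lambda+\omegaf$''. This is not immediate for $\alpha\in(0,1]$: the last column of $X$ is $[-\tilde c h_1;\,0;\,-\omegaf]$ with $\tilde c\neq 0$, so $X$ is not block triangular in any obvious way that exposes $-\omegaf$ on the diagonal. The paper handles $\alpha=0$ by the block-triangular observation (since then $\tilde c=0$), but for $\alpha\in(0,1]$ it computes $\det(X+\omegaf I)$ via a Schur-complement / block-determinant expansion and shows it vanishes because the factor $I-\hat h_1\hat h_1^T$ is singular. Your counting argument needs this fact as an input --- knowing that $2n$ eigenvalues come from $Z$ does not by itself identify the remaining one as $-\omegaf$ --- so you should plan to fill in that determinant computation.
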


\begin{proof}

Statement \ref{stmnt:hurwtiz} follows directly from Proposition \ref{prop:eig_relation} and Lemma \ref{lem:eig_real_pos}. From Proposition \ref{prop:eig_relation}: each eigenvalue $\lambda \in \sigma(X)$, for which $\lambda \neq -\omegaf$, there exists a $\bar{\lambda} \in \sigma(\omegaf M H +\omegaf \tilde{c}h_1 h_1 ^T)$ satisfying $\lambda^2 + \omegaf \lambda + \bar \lambda =0$. Because $\bar \lambda>0$ (Lemma \ref{lem:eig_real_pos}), by the Routh-Hurwtiz criterion $X$ is Hurwtiz. 

Statement \ref{stmt:spectrum} can be shown by taking an eigenvalue of $Z$, denoted as the positive scalar $\bar \lambda > 0$ (positivity is shown in Lemma \ref{lem:eig_real_pos}) and showing that it generates 2 eigenvalues of $X$. Take the logic and algebra of Proposition \ref{prop:eig_relation} in reverse. Express $\bar \lambda = -(\lambda_i^2 + \omegaf \lambda_i)$ for either $\lambda_i = \lambda_i^+$ or $\lambda_i =\lambda_i^-$ satisfying $\lambda_i^2 + \omegaf \lambda_i + \bar \lambda= 0$. The eigenvalue equation is
\begin{align}
    (\omegaf M H +\omegaf \tilde{c}h_1 h_1 ^T)u = -(\lambda_i^2 + \omegaf \lambda_i) u \label{eqn:eig_eqn_Z2}.
\end{align}
for some distinct eigenvector $u \in \mathbb{C}^n_{\neq 0}$. We introduce $v \in \mathbb{C}^n, w \in \mathbb{C}$ satisfying 
\begin{align}
    \omegaf H u &= (\lambda_i + \omegaf) v ,\label{eqn:eigr1_2}\\
    \omegaf h_1^T u &= (\lambda_i + \omegaf) w ,\label{eqn:eigr2_2}
\end{align}
and make substitutions to get
\begin{align}
    -(\lambda_i + \omegaf) M v - (\lambda_i + \omegaf) \tilde{c} h_1 w &= (\lambda_i + \omegaf)\lambda_i u. \label{eqn:eigr3_2}
\end{align}
where $\lambda_i \neq - \omegaf$ because $\lambda_i^2 + \omegaf \lambda_i + \bar \lambda= 0$ and $\bar \lambda > 0$. Then we recover the eigenvector equation in \eqref{eqn:eig_exp} with the eigenvalue $\lambda_i$ satisfying $X \nu = \lambda_i \nu$ with $\nu = [u^T, v^T, w]^T$. Because this holds for $2n$ eigenvalues ($\lambda_i=\lambda_i^+$ and $\lambda_i=\lambda_i^-$ for each $\bar \lambda \neq -\omegaf$), we have accounted for all but one of the $2n + 1$ eigenvalues in $\sigma (X)$.

Finally, the spectrum of $X$ always contains an eigenvalue $-\omegaf$, the last unaccounted eigenvalue. When $\alpha = 0$, the matrix $Y$ has an eigenvalue of $-\omegaf$, due to the triangular structure. For $\alpha \in (0,1]$, the determinant $\det(X+\omegaf I)$ is
\begin{align}
\left|X+\omegaf I\right| &= 
\left|
\arraycolsep=0.8pt\def\arraystretch{0.8} 
\begin{array}{c c c }
    I \omegaf & -M & - \tilde{c}h_1  \\
    \omegaf H & 0  & 0 \\
    \omegaf h_1^T & 0 & 0
\end{array} \right| 
=\omegaf^n
\left| 
\arraycolsep=1.5pt\def\arraystretch{1.0} 
\begin{array}{c c}
    H M & \Tilde{c} H h_1 \\
    h_1^T M & \tilde{c} || h_1||^2
\end{array} \right| \nonumber\\
&= \tilde{c} || h_1||^2 \omegaf^n \det(H M - \tilde{c} H h_1 (\tilde{c} || h_1||^2)^{-1} h_1^T M) \nonumber\\
&=\tilde{c} || h_1||^2 \omegaf^n \det(H (I -  \hat{h}_1 \hat{h}_1^T) M) = 0
\end{align}
using rules for determinants of block matrices. Because $(I - \hat{h_1} \hat{h_1}^T)$ is singular and the determinant of a product is the product of the determinants, then $\det(X+\omegaf I)=0$ and $-\omegaf$ is an eigenvalue. 

Statement \ref{stmnt:alpha0} can be shown by noting that $Z$ for $\alpha=0$, simply reduces to $k \omegaf H$.

Statement \ref{stmnt:alpha1} can be shown by looking at the analysis of this case in the proof in Lemma \ref{lem:eig_real_pos}, Case 3. The matrix $Z$ is similar to the block diagonal matrix $Z'$ which has spectra of its diagonal blocks: $\sigma(\mu) \cup \sigma(\omegaf k \tilde{H}_{22})$. The matrix $\tilde{H}_{22} \succ 0$ is the lower right $n-1 \times n-1$ block of $\tilde{H} = U^T H U $ for a orthogonal matrix $U$ which diagonalizes $\hat{h}_1 \hat{h}_1^T$. Finally, making the substitutions for $\mu$ and $\Tilde{c}$, Statement \ref{stmnt:alpha1} is shown.
\end{proof}

\subsection{Constrained Minimum}
\begin{proposition}
    The minimum of $J$ on the set $\mathcal{C} = \{\theta: h(\theta) \geq 0 \}$, for $J$ and $h$ given in Assumptions \ref{assum:1}-\ref{assum:2}, is $$J_s^* = J^* + \frac{h_0^2}{2 h_1^T H^{-1} h_1} u(-h_0)$$
    where $u$ is the unit step function. \label{prop:const_min}
\end{proposition}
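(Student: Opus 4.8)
The plan is to solve the constrained minimization problem $\min_\theta J(\theta)$ subject to $h(\theta)\ge 0$ directly, splitting into the two cases $h_0\ge 0$ and $h_0<0$ that are flagged in the text right after Assumption \ref{assum:2}. First I would substitute the change of variable $\phi=\theta-\theta^*$, so that $J=J^*+\tfrac12\phi^T H\phi$ and the constraint becomes $h_0+h_1^T\phi\ge 0$. When $h_0\ge 0$ the unconstrained minimizer $\phi=0$ (i.e. $\theta=\theta^*$) already satisfies the constraint, so $J_s^*=J^*$; since $u(-h_0)=0$ in this case, the claimed formula holds. When $h_0<0$, the point $\phi=0$ violates the constraint, so by convexity of $J$ and linearity of $h$ the minimizer lies on the boundary $h_1^T\phi=-h_0$, and I would solve the equality-constrained problem.

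For the active-constraint case I would use Lagrange multipliers (or equivalently project): set $\nabla(\tfrac12\phi^T H\phi)=\lambda h_1$, i.e. $H\phi=\lambda h_1$, so $\phi=\lambda H^{-1}h_1$; then impose $h_1^T\phi=-h_0$ to get $\lambda h_1^T H^{-1}h_1=-h_0$, hence $\lambda=-h_0/(h_1^T H^{-1}h_1)$ (positive since $h_0<0$ and $H\succ 0$). This yields $\phi^*=-\dfrac{h_0}{h_1^T H^{-1}h_1}H^{-1}h_1$, which matches the formula for $\theta_{\textup{smin}}$ quoted in Section \ref{sec:NBASfES}. Plugging back, $J_s^*-J^*=\tfrac12(\phi^*)^T H\phi^*=\tfrac12\lambda^2 h_1^T H^{-1}h_1=\dfrac{h_0^2}{2\,h_1^T H^{-1}h_1}$, and since $u(-h_0)=1$ here, this agrees with the stated expression. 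A short remark that $H^{-1}$ exists and $h_1^T H^{-1}h_1>0$ because $H\succ 0$ and $h_1\neq 0$ (Assumption \ref{assum:2}) completes the argument, and I would note that the quadratic program is convex, so the KKT point found is the global minimizer.

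There is no real obstacle here — the problem is a convex QP with a single linear inequality constraint, so it has a closed-form solution. The only thing requiring a sentence of care is verifying which constraint regime is active: one must check that in the $h_0\ge 0$ branch the unconstrained optimum is feasible (immediate, since $h(\theta^*)=h_0\ge 0$) and that in the $h_0<0$ branch the constraint is active at the optimum (immediate from strict convexity, since the unconstrained optimum is infeasible). Writing the two cases compactly with the unit step function $u(-h_0)$, as the statement does, then packages both results into the single displayed formula.
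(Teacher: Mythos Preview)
Your proposal is correct and follows essentially the same route as the paper: both split into the cases $h_0\ge 0$ and $h_0<0$, impose the first-order optimality condition $H(\theta_{\textup{smin}}-\theta^*)=\lambda h_1$ (the paper phrases this as a normality condition citing Rockafellar, you phrase it as Lagrange/KKT), solve for the multiplier using $h(\theta_{\textup{smin}})=0$, and substitute back into $J$. The computations and the resulting expressions are identical.
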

\begin{proof}
We use the following convex analysis result: ``A point $\theta_{\textup{smin}}$ is the minimum of $J$ relative to $S$ if and only if $-\nabla J(x)$ is normal to $S$ at $\theta_{\textup{smin}}$". This result is restated from Theorem 27.4 \cite{rockafellar1970convex}. Additionally, one can find this result in \cite{boyd2004convex} Section 4.2.3.

When $h_0\geq0$ the minimum of $J$ on $S$ is simply $J^*$. 

When $h_0 <0$, we use the normality condition to yield
\begin{equation}
    p h_1 = H(\theta_{\textup{smin}} - \theta^*)
\end{equation}
for some $p$ using the fact that $\nabla J(\theta) = H(\theta - \theta^*)$ and $\nabla h(\theta) = h_1$. Also,
\begin{equation}
    h(\theta_{\textup{smin}}) = h_0 + h_1^T(\theta_{\textup{smin}} - \theta^*)=0.
\end{equation}
Solving, we get
\begin{equation}
    p = -\frac{h_0}{h_1^T H^{-1} h_1}.
\end{equation}
The value of $J$ at $\theta_{\textup{smin}}$ is
\begin{align}
    J(\theta_{\textup{smin}}) &= J^* + \frac{1}{2}(\theta_{\textup{smin}} - \theta^*)^T H (\theta_{\textup{smin}} - \theta^*) \nonumber\\
    &= J^* + \frac{p^2}{2}h_1^T H^{-1} h_1 
    = J^* + \frac{h_0^2}{2 h_1^T H^{-1} h_1}.    
\end{align}
Combining cases $h \geq 0$ and $h < 0$ we reach the result.
\end{proof}
\subsection{Proof Theorem \ref{thm:convergence_avg}}
\begin{proof}
From $\theta(t)-\theta^* =  \tilde\theta(t) +S(t) = \tilde\theta(t) +O(a)$ we have
\begin{equation}
    J(\theta(t)) = J^* + \frac{1}{2}\tilde\theta(t)^T H \tilde\theta(t) + O(a).
\end{equation}
From Proposition \ref{prop:closeness_avg}, because $\|\tilde\theta(t) - \tilde\theta^{\rm a}(t) \| = O(1/\omega)$ and $\tilde\theta^{\rm a}(t) \to \tilde\theta^{\rm a, e}$, then
\begin{equation}
    \limsup_{t\rightarrow\infty} J(\theta(t))= J^* + \frac{1}{2}\tilde\theta^{{\rm a, e}T} H \tilde\theta^{\rm a, e} + O(a)+O(1/\omega). \label{lim:J}
\end{equation}
Before expanding $\theta^{\rm a, e}$ above, we use the Taylor series approximation for small $\delta$ to derive $\sqrt{1 + r \delta} = 1 + O(\delta)$ (for constant $r$) and derive the expression
\begin{align}
    (-\sgn (h_0) + \sqrt{1 + r \delta}) &= 1 - \sgn(h_0) + O(\delta) \nonumber \\
    &= 2(1 - u(h_0)) + O(\delta) \nonumber \\
    &=2 u(-h_0) + O(\delta)
\end{align}
where $u$ is the unit step function. We can write 
\begin{equation}
   \theta^{\rm a, e} = \frac{\|h_0 \| u(-h_0)}{h_1^T H^{-1} h_1}H^{-1} h_1 + O(\delta)
\end{equation}
from the approximation above and \eqref{eqn:eqm_theta}. Expanding part of \eqref{lim:J} we have
\begin{align}
    \frac{1}{2}\tilde\theta^{{\rm a, e}T} H \tilde\theta^{\rm a, e} &= \frac{h_0^2}{2 h_1^T H^{-1} h_1} u(-h_0) + O(\delta).
\end{align}
Using Proposition \ref{prop:const_min}, we achieve the result.
\end{proof}
\subsection{Proof of Theorem \ref{thm:assignable_prac_safety}}
\begin{proof}
From Proposition \ref{prop:closeness_avg} we have $||\theta(t) - (\tilde{\theta}^{\rm a}(t) + \theta^*)|| = O (1/\omega+a)$ for all $t\geq 0$. From Proposition \ref{prop:sing_pert} we have $||{\tilde \theta}^{\rm a}(t) - \tilde {\theta}_{\rm r}(t)|| = O (1/\omegaf)$ for all $t\geq 0$. Therefore $||\theta(t) - (\tilde {\theta}_{\rm r}(t) + \theta^*)|| = O (1/\omegaf +1/\omega+a)$ for all $t\geq 0$. Because $h$ is Lipschitz, $||h(\theta(t)) - h(\tilde {\theta}_{\rm r}(t) + \theta^*)|| = O (1/\omegaf +1/\omega+a)$ for all $t\geq 0$. Using Proposition \ref{prop:safety_of_reduced} we have the result.
\end{proof}

\bibliographystyle{plain}
\bibliography{references}

\end{document}